\documentclass{llncs}

\newif\ifipmu
\ipmufalse

\usepackage{hyperref}

\usepackage{zref-clever}
\zcsetup{cap,abbrev,nameinlink=false}
\AddToHook{env/proposition/begin}{\zcsetup{countertype={theorem=proposition}}}
\AddToHook{env/corollary/begin}{\zcsetup{countertype={theorem=corollary}}}
\AddToHook{env/lemma/begin}{\zcsetup{countertype={theorem=lemma}}}

\usepackage{xcolor}

\usepackage{mathtools,amsfonts,mathrsfs}
\usepackage{enumitem}

\ifipmu
\else
\newenvironment{relproof}[1]{%
\trivlist\item[\hskip\labelsep\itshape{Proof of #1.}]
}{%
\qed
}
\fi

\DeclarePairedDelimiter{\gr}{(}{)}
\DeclarePairedDelimiter{\br}{[}{]}
\DeclarePairedDelimiterX{\brcond}[2]{[}{]}{#1\delimsize\vert#2}
\DeclarePairedDelimiter{\set}{\{}{\}}
\DeclarePairedDelimiter{\abs}{\lvert}{\rvert}

\DeclarePairedDelimiterX{\cci}[2]{[}{]}{#1,#2}
\DeclarePairedDelimiterX{\coi}[2]{[}{[}{#1,#2}
\DeclarePairedDelimiterX{\ooi}[2]{]}{[}{#1,#2}
\newcommand{\reals}{\mathbb{R}}
\newcommand{\posreals}{\mathbb{R}_{\geq0}}
\newcommand{\sposreals}{\mathbb{R}_{>0}}
\newcommand{\extreals}{\overline{\mathbb{R}}}
\newcommand{\posextreals}{\extreals\vphantom{\reals}_{\geq0}}
\newcommand{\extvars}{\overline{\mathbb{V}}}
\newcommand{\posints}{\mathbb{Z}_{\geq0}}
\newcommand{\nats}{\mathbb{N}}

\newcommand{\compl}[1]{#1^\mathrm{c}}

\newcommand{\llambda}{\underline{\lambda}}
\newcommand{\ulambda}{\overline{\lambda}}
\newcommand{\llambdai}{\llambda_\mathrm{i}}
\newcommand{\ulambdai}{\ulambda_\mathrm{i}}
\newcommand{\llambdao}{\llambda_\mathrm{o}}
\newcommand{\ulambdao}{\ulambda_\mathrm{o}}

\newcommand{\countpaths}{\Omega}
\newcommand{\pth}{\omega}
\newcommand{\vpth}{\varpi}

\DeclareMathOperator{\dom}{dom}

\newcommand{\process}[1][t]{\gr{\mathcal{S}_#1}_{#1\in\posreals}}
\newcommand{\proc}[1][t]{\mathcal{S}_{#1}}
\newcommand{\stopt}{\tau}
\newcommand{\stopts}{\mathfrak{T}}
\newcommand{\tostop}[2]{\mathcal{I}_{#1}\gr{#2}}

\newcommand{\indica}[1]{\mathbb{I}_{#1}}
\newcommand{\betstrat}{G}
\newcommand{\stake}[1]{h_{#1}}
\newcommand{\capital}[2]{\mathcal{K}^{#1}_{#2}}

\newcommand{\tscapprocs}[1][\lambda]{\mathfrak{K}_{#1}}
\newcommand{\oscapprocs}[1][\cci{\llambda}{\ulambda}]{\mathfrak{K}_{#1}}
\newcommand{\stoptdom}[1][\stopt]{\extvars_{#1}}

\newcommand{\gencapprocs}{\mathfrak{K}}
\newcommand{\gencapital}{\mathcal{K}}

\newcommand{\uprev}{\overline{\mathrm{E}}\vphantom{\mathrm{E}}}
\newcommand{\lprev}{\underline{\mathrm{E}}\vphantom{\mathrm{E}}}

\NewDocumentCommand{\gencuprev}{mO{\stopt}}{\uprev_{\gencapprocs}\br{#1\vert #2}}
\NewDocumentCommand{\genclprev}{mO{\stopt}}{\lprev_{\gencapprocs}\br{#1\vert #2}}
\newcommand{\genuprev}[1]{\uprev_{\gencapprocs}\br{#1}}

\NewDocumentCommand{\tscuprev}{O{\lambda}mO{\stopt}}{\uprev_{#1}\br{#2\vert #3}}
\NewDocumentCommand{\tsclprev}{O{\lambda}mO{\stopt}}{\lprev_{#1}\br{#2\vert #3}}
\NewDocumentCommand{\tsuprev}{O{\lambda}m}{\uprev_{#1}\br{#2}}
\NewDocumentCommand{\tslprev}{O{\lambda}m}{\lprev_{#1}\br{#2}}
\NewDocumentCommand{\oscuprev}{O{\cci{\llambda}{\ulambda}}mO{\stopt}}{\uprev_{#1}\brcond{#2}{#3}}
\NewDocumentCommand{\oscuprevstar}{O{\cci{\llambda}{\ulambda}}mO{\stopt}}{\uprev_{#1}\brcond*{#2}{#3}}
\NewDocumentCommand{\osclprev}{O{\cci{\llambda}{\ulambda}}mO{\stopt}}{\lprev_{#1}\brcond{#2}{#3}}
\NewDocumentCommand{\osuprev}{O{\cci{\llambda}{\ulambda}}m}{\uprev_{#1}\br{#2}}
\NewDocumentCommand{\oslprev}{O{\cci{\llambda}{\ulambda}}m}{\lprev_{#1}\br{#2}}

\newcommand{\bfns}[1][]{\mathbb{G}_{#1}}
\newcommand{\eye}{\mathrm{I}}
\newcommand{\poissgen}{\mathrm{G}}
\newcommand{\poisssg}{\mathrm{S}}

\newcommand{\upoissgen}{\overline{\mathrm{G}}\vphantom{\mathrm{G}}}
\newcommand{\upoisssg}{\overline{\mathrm{S}}\vphantom{\mathrm{S}}}
\newcommand{\lstake}[1]{\underline{h}_{#1}}
\newcommand{\ustake}[1]{\overline{h}_{#1}}
\newcommand{\diffstake}[1]{\overline{\underline{h}}_{#1}}

\newcommand{\utranop}{\overline{\mathrm{T}}\vphantom{\mathrm{T}}}
\newcommand{\tranop}{\mathrm{T}\vphantom{\mathrm{T}}}

\title{Allowing for Imprecision in the Game-theoretic Characterisation of the Poisson Process}
\author{Alexander Erreygers\orcidID{0000-0002-0409-2999}}
\institute{Foundations Lab for imprecise probabilities, Ghent University, Belgium}

\begin{document}

\maketitle

\begin{abstract}
In their 1993 paper `Forecasting point and continuous processes: Prequential analysis' in \emph{Test}, Vovk put forward a game-theoretic definition of the Poisson process.
A key assumption therein is that the rate of the Poisson process is known or specified exactly.
In contrast, I replace this assumption with the less stringent---and arguably more realistic---one that the available information about the process takes the form of bounds on the rate rather than a single, exact value.
The resulting process has properties similar to the standard, `precise' Poisson process, albeit with an imprecise flavour to them, thus justifying the moniker `imprecise Poisson process'.
\keywords{Counting process \and Capital process \and Bid--ask spread}
\end{abstract}

\section{Introduction}
More than twenty years ago, Vovk~\cite{1993Vovk} put forward a `prequential' definition of the Poisson and Wiener processes,\footnote{More generally, he considers the reduction of continuous martingales to these processes through a change of time.}
with an (upper) expectation operator that is derived from the capital in a gambling game.
Nowadays, this approach towards modelling uncertainty is known as the \emph{game-theoretic} one, as laid out and popularised by Shafer \& Vovk in their seminal monographs \cite{2001ShaferVovk-Probability,2019ShaferVovk-GameTheoretic}.
Discrete-time processes have been studied extensively in this game-theoretic framework, often allowing for imprecision in the local uncertainty models.
Continuous-time processes have also received quite some attention in this framework; in contrast to discrete-time processes, I'm not aware of work in the setting of continuous time that allows for imprecision.
This is why in this contribution, I set out to allow for imprecision, or a bid--ask spread, in Vovk's~\cite{1993Vovk} game-theoretic definition of the Poisson process.

I lay down the foundation for doing so in \zcref{sec:setup}, in the form of basic notation and terminology regarding (counting) paths, variables and processes.
\zcref[S]{sec:game-theoretic upper expectations} introduces, in a relatively general manner, the basics of Shafer \& Vovk's game-theoretic foundations for modelling uncertainty regarding a continuous-time process.
Next, I specialise this to the imprecise Poisson process in \zcref{sec:betting strategies}, and investigate the properties of the resulting conditional upper expectation operator in \zcref{sec:properties}.
\zcref[S]{sec:conclusion} concludes this contribution.

\ifipmu
In order to comply with the page limit, I state most results without proof; the sole exceptions are \zcref{prop:expected increment,prop:shifted upper expectation,the:with sublinear poisson semigroup}.
Unless mentioned otherwise, the omitted proofs are modifications of the proofs of related results in \cite{1993Vovk}; the interested reader can find these proofs in the extended version of this contribution available on~\href{***}{arXiv:***}.
\else
This manuscript is accepted for publication in the proceedings of \href{https://www.sbai.uniroma1.it/conferenze/ipmu2026}{IPMU 2026}.
This extended arXiv version differs from the conference version in some parts, the biggest difference being that includes \zcref{asec:relegated-proofs}, to which I've relegated the proofs for most of the results; the sole exceptions are \zcref{prop:expected increment,prop:shifted upper expectation,the:with sublinear poisson semigroup}.
Unless mentioned otherwise, these relegated proofs are modifications of the proofs of related results in~\cite{1993Vovk}.
\fi

\section{Counting Paths, Variables \& Processes}
\label{sec:setup}
The set-up for this contribution is essentially standard, and follows Vovk's~\cite{1993Vovk} rather closely; for the reader's sake, I feel it's nonetheless necessary to take some care in introducing it clearly.
Let \(\countpaths\) be the set of all \emph{counting paths}: those paths~\(\pth\colon\posreals\to\posints\) that start in~\(0\), are increasing with \emph{unit} jumps
and right-continuous; these paths also have limits from the left, whence they form a subset of the well-known càdlàg paths \cite[Ch.~3, §~5]{1986EthierKurtz-Markov}.

An \emph{extended real partial variable} is a map from a subset of~\(\countpaths\) to the set~\(\extreals\) of extended real numbers.\footnote{
I adhere to the same extension of addition and multiplication from~\(\reals\) to~\(\extreals\) as Shafer \& Vovk~\cite[p.~420]{2019ShaferVovk-GameTheoretic}, so in particular \(+\infty-\infty=+\infty\).
}
We drop the adjective `partial' if the domain is~\(\countpaths\) and/or the adjective `extended' if the codomain is~\(\reals\).
So \(\extvars\coloneqq\extreals\vphantom{R}^\countpaths\) is the set of extended real variables, and for any time point~\(t\in\posreals\), the corresponding \emph{coordinate variable}
\begin{equation*}
N_t
\colon \countpaths\to\posints
\colon \pth\mapsto\pth(t)
\end{equation*}
is a real variable; with \(g\colon\posints\to\reals\) and \(t\in\posreals\), the composition
\begin{equation*}
    g(N_t)
    \coloneqq g\circ N_t
    \colon \countpaths\to\reals
    \colon \pth\mapsto g\gr{\pth\gr{t}}
\end{equation*}
is a real variable that is bounded if and only if \(g\) is bounded.
Indicator variables are also bounded real variables: for any \emph{event}~\(A\subseteq\countpaths\), its corresponding \emph{indicator}~\(\indica{A}\) is the \(\set{0,1}\)-valued variable that is \(1\) on~\(A\) and \(0\) elsewhere.

An essential class of extended real variables are the \emph{stopping times}: this class~\(\stopts\) consists of the positive extended real variables~\(\stopt\)---that is, maps from \(\countpaths\) to~\(\posextreals\)---such that for all \(\pth_1,\pth_2\in\countpaths\), if \(\pth_1\vert_{\cci{0}{\stopt\gr{\pth_1}}\cap\posreals}=\pth_2\vert_{\cci{0}{\stopt\gr{\pth_1}}\cap\posreals}\) then \(\stopt\gr{\pth_1}=\stopt\gr{\pth_2}\).
Note that the constant real variable \(t\colon\pth\mapsto t\) is a stopping time, and that for all stopping times~\(\stopt_1, \stopt_2\in\stopts\), their pointwise minimum~\(\stopt_1\wedge\stopt_2\) and maximum~\(\stopt_1\vee\stopt_2\) are also stopping times.
For any stopping time~\(\stopt\in\stopts\) and path~\(\pth\), we let \(\tostop{\stopt}{\pth}\) be the set of counting paths that agree with \(\pth\) on~\(\cci{0}{\stopt\gr{\pth}}\cap\posreals\):
\begin{equation*}
\tostop{\stopt}{\pth}
\coloneqq \set{\vpth\in\countpaths\colon \vpth\vert_{\cci{0}{\stopt\gr{\pth}}\cap\posreals}=\pth\vert_{\cci{0}{\stopt\gr{\pth}}\cap\posreals}}.
\end{equation*}
An extended real partial variable~\(f\) is \emph{\(\stopt\)-measurable} if for all \(\pth_1,\pth_2\in\dom f\), \[\pth_1\vert_{\cci{0}{\stopt\gr{\pth_1}}\cap\posreals}=\pth_2\vert_{\cci{0}{\stopt\gr{\pth_1}}\cap\posreals}\implies f(\pth_1)=f(\pth_2);\] differently put, \(f\) is constant on~\(\tostop{\stopt}{\pth}\) for all \(\pth\in\dom f\).
Although a stopping time~\(\stopt\) can take the value \(+\infty\), we are typically interested in the set of paths for which it is finite/real; we denote this set by \(\set{\stopt<+\infty}\coloneqq \set{\pth\in\countpaths\colon \stopt\gr{\pth}<+\infty}\).
For example, for any stopping time~\(\stopt\), we denote the set of extended real partial variables whose domain includes~\(\set{\stopt<+\infty}\) by~\(\stoptdom\), and for any \(\pth\in\set{\stopt<+\infty}\), we shorten \(\pth\gr{\stopt\gr{\pth}}\) to~\(\pth\gr{\stopt}\).

Finally, a \emph{process} is a family of real variables~\(\proc[\bullet]=\process\) such that \(\proc\) is \(t\)-measurable for all \(t\in\posreals\).
Such a process~\(\proc[\bullet]\) is \emph{bounded below} if
\begin{equation*}
\inf\proc[\bullet]
\coloneqq\inf\set{\proc[t]\gr{\pth}\colon t\in\posreals, \pth\in\countpaths}
> -\infty.
\end{equation*}
The canonical example of a process that is bounded below is the \emph{coordinate process}~\(N_\bullet\).
For any process~\(\proc[\bullet]\) and stopping time~\(\stopt\), we'll often consider the `stopped process'
\(
\proc[\stopt]
\colon \set{\stopt<+\infty}\to\reals
\colon \pth\mapsto \proc[\stopt\gr{\pth}]\gr{\pth\gr{\stopt}}
\)
and the extended real variable
\begin{equation*}
\liminf \proc[\bullet]
\colon \countpaths\to\extreals
\colon \pth\mapsto
\liminf_{t\to+\infty} \proc[t]\gr{\pth};
\end{equation*}
note that if \(\proc[\bullet]\) is bounded below, then so is \(\liminf\proc[\bullet]\).


\section{Game-theoretic Upper Expectations}\label{sec:game-theoretic upper expectations}
In this contribution, we'll use Shafer \& Vovk's game-theoretic approach to model the uncertain (future) evolution of the coordinate process~\(N_\bullet\).
A lot can be said about their powerful game-theoretic framework, but we'll stick to what is necessary for the remainder of this contribution; we refer the interested reader to their monographs~\cite{2001ShaferVovk-Probability,2019ShaferVovk-GameTheoretic} and references therein for more background.
In the setting of continuous time, Shafer \& Vovk consider a game between two players, called \emph{trader} and \emph{market}: first trader announces their strategy to trade in a number of securities, then market determines the price path of the securities.
As in discrete time, one can also imagine a third player, called \emph{forecaster}: they choose the securities that are available to trader.
In the context of this contribution, the securities are derived from the coordinate process~\(N_\bullet\) and market chooses one realisation~\(\pth\) from~\(\countpaths\).

Trader's strategy has to obey some restrictions for this game to make sense.
For example, it makes sense to assume that at time~\(t\in\posreals\), they are uncertain about the future prices of the securities, or differently put, only have exact information about the prices of the securities in the past.
We'll get back to these restrictions in \zcref{sec:betting strategies} further on.
For now, we'll assume that each allowed trading strategy gives rise to a process \(\gencapital_\bullet\), which we'll call trader's \emph{capital process} under this strategy; recall from \zcref{sec:setup} that for \(\gencapital_\bullet\) to be a process, it must be that \(\gencapital_t\) is \(t\)-measurable for all \(t\in\posreals\), meaning that it's completely defined by the value of the paths on~\(\cci{0}{t}\)---or, in other words, doesn't depend on the future values of the prices of the securities.

The set~\(\gencapprocs\) collects the capital processes corresponding to all allowed trading strategies.
Then the conditional upper expectation~\(\gencuprev{\bullet}\colon \stoptdom\times\set{\stopt<+\infty}\to\extreals\) is defined for all \(f\in\stoptdom\) and \(\pth\in\set{\stopt<+\infty}\) by
\begin{equation*}
\gencuprev{f}\gr{\pth}
\coloneqq \inf \set[\big]{\gencapital_\stopt\gr{\pth}\colon \gencapital_\bullet\in\gencapprocs, \inf\gencapital_\bullet>-\infty, \liminf\gencapital_\bullet\geq_{\tostop{\stopt}{\pth}} f},
\end{equation*}
where here and in the remainder, we write~`\(\liminf\gencapital_\bullet\geq_{\tostop{\stopt}{\pth}} f\)' as a shorthand for `\(\liminf \gencapital_\bullet\gr{\vpth}\geq f\gr{\vpth}\) for all \(\vpth\in\tostop{\stopt}{\pth}\)'.
This conditional expectation~\(\gencuprev{f}\gr{\pth}\) is the infimum capital trader needs at time~\(\stopt\gr{\pth}\) to (`in the long run') superhedge \(f\) in `all possible futures'~\(\tostop{\stopt}{\pth}\) using an allowed trading strategy for which the losses are bounded below a priori.
Differently put, if trader has capital~\(c>\gencuprev{f}\gr{\pth}\) at~\(\stopt\gr{\pth}\) (and knowing the partial counting path~\(\pth\vert_{\cci{0}{\stopt\gr{\pth}}}\)), they can trade according to an allowed trading strategy such that they'll have~\(f\gr{\vpth}\) in the long run, whatever the actual realisation \(\vpth\in\tostop{\stopt}{\pth}\) turns out to be.
Consequently, we can interpret \(\gencuprev{f}\gr{\pth}\) as trader's infimum selling price for the uncertain pay-off~\(f\), given the information they have at time~\(\stopt\gr{\pth}\).
Since \(\pth\gr{0}=0\) for all \(\pth\in\countpaths\), \(\gencapital_0\) is constant on~\(\countpaths\), and the same is true for~\(\gencuprev{f}[0]\), which for this reason we'll shorten to~\(\genuprev{f}\) from now on; this can be interpreted as the infimum initial capital trader needs at time~\(0\) to superhedge~\(f\) `in the long run', or alternatively, their infimum selling price for the uncertain pay-off~\(f\).

For every stopping time~\(\stopt\in\stopts\) and extended variable~\(f\in\stoptdom\), our definition ensures that the extended real partial variable~\(\pth\mapsto\gencuprev{f}\gr{\pth}\) is constant on~\(\tostop{\stopt}{\pth}\), and therefore \(\stopt\)-measurable.
Furthermore, for every extended variable~\(f\in\extvars\) and path~\(\pth\in\countpaths\), the function \(t\mapsto\gencuprev{f}[t]\gr{\pth}\) is increasing because the map~\(t\mapsto\tostop{t}{\pth}\) is decreasing.

Our calling~\(\gencuprev{\bullet}[\bullet]\) a (conditional) upper expectation---after \cite{1975Williams}---is justified under some mild assumptions on~\(\gencapprocs\).
\begin{proposition}\label{prop:gencuprev}
Suppose \(\gencapprocs\) contains all constant processes and is a cone---that is, closed under pointwise addition and multiplication with positive scalars.
Then for all \(\stopt\in\stopts\), \(f, g\in\stoptdom\)  and \(\mu\in\reals\),
\begin{enumerate}[label=\upshape{E\arabic*.}, ref=\upshape(E\arabic*), leftmargin=*, series=gencuprev]
\item\label{gencuprev:sup} \(\gencuprev{f}\leq\sup f\);
\item\label{gencuprev:subadditivity} \(\gencuprev{f+g}\leq\gencuprev{f}+\gencuprev{g}\);
\item\label{gencuprev:positive homogeneity} \(\gencuprev{\mu f}=\mu \gencuprev{f}\) whenever \(\mu>0\);
\item\label{gencuprev:monotonicity} \(\gencuprev{f}\leq\gencuprev{g}\) whenever \(f\leq g\);
\item\label{gencuprev:constant additivity} \(\gencuprev{f+\mu}=\gencuprev{f}+\mu\).
\end{enumerate}
\end{proposition}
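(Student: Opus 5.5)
The plan is to prove each property directly from the definition of \(\gencuprev{\bullet}\), fixing \(\stopt\in\stopts\) and \(\pth\in\set{\stopt<+\infty}\) and arguing pointwise. Write \(A_f\) for the set of \(\gencapital_\bullet\in\gencapprocs\) with \(\inf\gencapital_\bullet>-\infty\) and \(\liminf\gencapital_\bullet\geq_{\tostop{\stopt}{\pth}}f\). Then \(\gencuprev{f}\gr{\pth}=\inf\set{\gencapital_\stopt\gr{\pth}\colon\gencapital_\bullet\in A_f}\). Each property amounts to comparing these sets of superhedging processes.

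The steps are as follows.

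\textbf{E1.} If \(\sup f=+\infty\) there is nothing to prove. If \(\sup f=-\infty\), then every constant process \(c\in\reals\) lies in \(A_f\), so the infimum is \(-\infty\). Otherwise the constant process \(\sup f\) lies in \(\gencapprocs\) by assumption, is bounded below, and has \(\liminf\) equal to \(\sup f\geq f\). Hence \(\gencuprev{f}\gr{\pth}\leq\sup f\).

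\textbf{E4.} If \(f\leq g\), then \(A_g\subseteq A_f\), and monotonicity follows immediately.

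\textbf{E2.} Take \(\gencapital_\bullet\in A_f\) and \(\gencapital'_\bullet\in A_g\). The cone property gives \(\gencapital_\bullet+\gencapital'_\bullet\in\gencapprocs\), which is bounded below. Moreover
\[\liminf(\gencapital_\bullet+\gencapital'_\bullet)\geq\liminf\gencapital_\bullet+\liminf\gencapital'_\bullet\geq f+g\]
on \(\tostop{\stopt}{\pth}\). Taking infima over both processes yields subadditivity.

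\textbf{E3.} For \(\mu>0\), multiplication by \(\mu\) is a bijection from \(A_f\) onto \(A_{\mu f}\), with inverse multiplication by \(1/\mu\), and it scales \(\gencapital_\stopt\gr{\pth}\) by \(\mu\).

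\textbf{E5.} Adding the constant process \(\mu\) to a process in \(A_f\) gives a process in \(A_{f+\mu}\). Here I must use that constants are in \(\gencapprocs\) and closure under addition, and, for \(\mu<0\), apply the same argument with \(-\mu\) in the other direction. This gives \(\gencuprev{f+\mu}\leq\gencuprev{f}+\mu\), and the reverse inequality follows by symmetry. Since \(\mu\) may be negative, this step does not need \(\gencapprocs\) to be closed under negation: I only ever add constants, and every constant, positive or negative, is assumed to be in \(\gencapprocs\).

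The main obstacle is the care needed with extended-real arithmetic under the convention \(+\infty-\infty=+\infty\).

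\begin{itemize}
\item In E2, \(f+g\) may involve \(+\infty-\infty\), and \(\liminf\) of a sum of bounded-below processes could mix infinities. Because the processes are bounded below, their \(\liminf\) values are never \(-\infty\), so the superadditivity of \(\liminf\) holds. I still need to check that the pointwise inequality \(\liminf\gencapital_\bullet+\liminf\gencapital'_\bullet\geq f+g\) survives when, say, \(f=+\infty\) and \(g=-\infty\). In that case \(\liminf\gencapital_\bullet=+\infty\), so the left side is \(+\infty\) and the inequality holds.
\item The empty-set cases, where an infimum equals \(+\infty\), must be handled. For E2 the right-hand side is then \(+\infty\), since a sum involving \(+\infty\) equals \(+\infty\) under the convention, so the inequality is trivial.
\item In E3 the case where the infimum is \(\pm\infty\) must likewise be checked separately.
\end{itemize}
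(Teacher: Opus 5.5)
Your proposal is correct and follows the same route as the paper. The paper only notes that the bounded-below members of \(\gencapprocs\) again form a non-empty cone containing the constants and that E1--E5 then follow directly from the definition; your comparisons of the superhedging sets \(A_f\), together with the extended-real bookkeeping under \(+\infty-\infty=+\infty\), fill in the details the paper leaves implicit.
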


Following Vovk~\cite[p.~196]{1993Vovk}, we call the set~\(\gencapprocs\) of allowed capital processes \emph{coherent} if
\begin{equation}\label{eq:coherence condition}
\inf\set*{\liminf \gencapital_\bullet\gr{\vpth}\colon \vpth\in\tostop{t}{\pth}}
\leq \gencapital_t\gr{\pth}
\quad\text{for all } \gencapital_\bullet\in\gencapprocs, t\in\posreals, \pth\in\countpaths;
\end{equation}
in other words, this coherence condition ensures that no trading strategy can result in a guaranteed profit for trader.
That we demand coherence should come to no surprise to the reader who is familiar with coherent (conditional) upper expectations a la de Finetti~\cite{2017Finetti-Theory}, Williams~\cite{1975Williams} and/or Walley~\cite{1991Walley}---see also \cite{2014TroffaesDeCooman-Lower}.
As is customary in that setting, we define the conditional lower expectation through conjugacy:
\begin{equation*}
\genclprev{f}\gr{\pth}
\coloneqq -\gencuprev{-f}\gr{\pth}
\quad\text{for all } \stopt\in\stopts, f\in\stoptdom, \pth\in\set{\stopt<+\infty};
\end{equation*}
the value~\(\genclprev{f}\gr{\pth}\) can be thought of as trader's supremum buying price for~\(f\) given the information they have at~\(\stopt\gr{\pth}\).
If \(\gencapprocs\) is coherent, trader's buying price is never higher than their selling price.
\begin{proposition}\label{prop:gencuprev with coherence}
If \(\gencapprocs\) satisfies the conditions in \zcref{prop:gencuprev} and is coherent, then for all \(\stopt\in\stopts\), \(f\in\stoptdom\) and \(\pth\in\set{\stopt<+\infty}\),
\begin{enumerate}[resume*=gencuprev]
\item\label{gencuprev:better bounds} \(\inf f\vert_{\tostop{\stopt}{\pth}}\leq \gencuprev{f}\gr{\pth}\leq \sup f\vert_{\tostop{\stopt}{\pth}}\);
\item\label{gencuprev:lower upper} \(\genclprev{f}\gr{\pth}\leq\gencuprev{f}\gr{\pth}\).
\end{enumerate}
\end{proposition}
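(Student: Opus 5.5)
The plan is to prove the two inequalities in \ref{gencuprev:better bounds} directly from the definition of \(\gencuprev{\bullet}\), and then obtain \ref{gencuprev:lower upper} from \ref{gencuprev:better bounds} together with \zcref{prop:gencuprev}. Throughout, fix \(\stopt\in\stopts\), \(f\in\stoptdom\) and \(\pth\in\set{\stopt<+\infty}\), and let \(t\coloneqq\stopt\gr{\pth}\in\posreals\). By definition \(\tostop{\stopt}{\pth}=\tostop{t}{\pth}\), where on the right \(t\) denotes the constant stopping time, and \(\gencapital_\stopt\gr{\pth}=\gencapital_t\gr{\pth}\) for every process \(\gencapital_\bullet\).

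\emph{Upper bound in \ref{gencuprev:better bounds}.} Let \(c\coloneqq\sup f\vert_{\tostop{\stopt}{\pth}}\). If \(c=+\infty\) there is nothing to prove, so assume \(c<+\infty\). If \(c=-\infty\), I will run the argument below with every real \(c'\) and let \(c'\to-\infty\). For real \(c\), the constant process \(c\) belongs to \(\gencapprocs\) by assumption. It is bounded below, and \(\liminf c=c\geq f\) on \(\tostop{\stopt}{\pth}\). Since its value at \(\stopt\gr{\pth}\) is \(c\), the infimum in the definition gives \(\gencuprev{f}\gr{\pth}\leq c\).

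\emph{Lower bound in \ref{gencuprev:better bounds}.} Take any \(\gencapital_\bullet\in\gencapprocs\) with \(\inf\gencapital_\bullet>-\infty\) and \(\liminf\gencapital_\bullet\geq_{\tostop{\stopt}{\pth}} f\). The coherence condition~\eqref{eq:coherence condition} at the real time \(t=\stopt\gr{\pth}\) gives
\begin{equation*}
\inf f\vert_{\tostop{\stopt}{\pth}}
\leq \inf\set*{\liminf\gencapital_\bullet\gr{\vpth}\colon\vpth\in\tostop{t}{\pth}}
\leq \gencapital_t\gr{\pth}
=\gencapital_\stopt\gr{\pth}.
\end{equation*}
Taking the infimum over all such \(\gencapital_\bullet\) yields the claim; if there are none, the infimum is \(+\infty\) and the bound is trivial. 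The only point needing care is the identification of \(\tostop{\stopt}{\pth}\) with \(\tostop{t}{\pth}\), which is what lets us apply coherence, stated for deterministic times only, at a stopping time.

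\emph{Proof of \ref{gencuprev:lower upper}.} By the convention \(+\infty-\infty=+\infty\), we have \(f+(-f)\geq0\) pointwise, including where \(f\) is infinite. Monotonicity~\ref{gencuprev:monotonicity} and subadditivity~\ref{gencuprev:subadditivity} (applied pointwise at \(\pth\)) then give
\begin{equation*}
\gencuprev{0}\gr{\pth}\leq\gencuprev{f+(-f)}\gr{\pth}\leq\gencuprev{f}\gr{\pth}+\gencuprev{-f}\gr{\pth}.
\end{equation*}
By \ref{gencuprev:better bounds}, \(\gencuprev{0}\gr{\pth}=0\). The step I expect to require the most attention is the bookkeeping of infinite values, which I will handle case by case.
\begin{itemize}
\item If \(\gencuprev{f}\gr{\pth}=+\infty\), or \(\gencuprev{-f}\gr{\pth}=+\infty\) (so that \(\genclprev{f}\gr{\pth}=-\infty\)), the inequality is immediate.
\item Otherwise neither value is \(+\infty\), and neither can be \(-\infty\): if one were \(-\infty\) and the other not \(+\infty\), their sum would be \(-\infty<0\), a contradiction.
\end{itemize}
Both values are therefore real, and rearranging gives \(-\gencuprev{-f}\gr{\pth}\leq\gencuprev{f}\gr{\pth}\), which is \ref{gencuprev:lower upper}.
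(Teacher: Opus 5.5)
Your proof is correct and follows essentially the same route as the paper. You get the upper bound in (E6) from constant processes and the lower bound from coherence at the real time \(\stopt\gr{\pth}\); the paper packages that step as an auxiliary lemma, which amounts to coherence combined with the superhedging condition. For (E7) you use \(f-f\geq0\), subadditivity, monotonicity and \(\gencuprev{0}\gr{\pth}=0\), with the same case analysis showing both values are real, and your explicit handling of the case \(\sup f\vert_{\tostop{\stopt}{\pth}}=-\infty\) and of the convention \(+\infty-\infty=+\infty\) only fills in details the paper leaves implicit.
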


\section{Betting Strategies \& Capital Processes for the Poisson Process}\label{sec:betting strategies}
As announced in the Introduction, this contribution aims to extend Vovk's~\cite{1993Vovk} game-theoretic description of the Poisson process to allow for imprecision.
Their game-theoretic characterisation draws inspiration from Watanabe's martingale characterisation of the Poisson process \cite[Theorem~2.3 and following Remark]{1964Watanabe}: the coordinate process~\(N_\bullet\) is a Poisson process with rate~\(\lambda\in\posreals\) if and only if the compensated process~\(\gr{N_t-\lambda t}_{t\in\posreals}\) is a martingale---that is, if conditional on the information up to the current time point~\(s\in\posreals\), for every future time point~\(t\in\ooi{s}{+\infty}\) the expectation of~\(N_t-\lambda t\) is~\(N_s-\lambda s\), or differently put, the expectation of the increment~\(N_t-N_s\) is~\(\lambda\gr{t-s}\).
Vovk's allowed trading strategies are exactly those that involve this compensated increment.

If forecaster puts forth some \emph{rate}~\(\lambda\in\posreals\), trader is allowed to bet on a series of compensated increments of the form~\(N_{\stopt_{k+1}}-N_{\stopt_k}-\lambda\gr{\stopt_{k+1}-\stopt_k}\), for stopping times~\(\stopt_k\leq\stopt_{k+1}\).
More formally, a \emph{two-sided elementary trading strategy}~\(\betstrat\) is an increasing sequence of stopping times \(\stopt_1\leq\stopt_2\leq\cdots\leq \stopt_n\leq\stopt_{n+1}\) and a sequence~\(\stake{1}, \dots, \stake{n}\) of variables such that for all \(k\in\set{1, \dots, n}\), the \emph{stake}~\(\stake{k}\) is a (possibly partial) bounded real variable with \(\dom\stake{k}\supseteq\set{\stopt_k<+\infty}\) that is \(\stopt_k\)-measurable.
We interpret such an elementary trading strategy~\(\betstrat\) as follows: at time~\(\stopt_k\), trader puts stake~\(h_k\) on the increment~\(N_{\stopt_{k+1}}-N_{\stopt_k}-\lambda\gr{\stopt_{k+1}-\stopt_k}\)---or more accurately, they `pay' \(\stake{k}\gr{\pth}\gr{\pth\gr{\stopt_k}-\lambda\stopt_{k}\gr{\pth}}\) to receive \(\stake{k}\gr{\pth}\gr{\pth\gr{\stopt_{k+1}}-\lambda\stopt_{k+1}\gr{\pth}}\) at time~\(\stopt_{k+1}\) of their choice.
Consequently, if trader starts with initial capital~\(c\in\reals\) and follows the two-sided elementary trading strategy~\(\betstrat\), their capital at time~\(t\in\posreals\) is the real variable
\begin{equation}\label{eqn:simple capital process}
\capital{\betstrat, c}{t}
\coloneqq c+\sum_{k=1}^n h_k\cdot \gr[\big]{N_{\stopt_{k+1}\wedge t}-N_{\stopt_k\wedge t} - \lambda\gr{\stopt_{k+1}\wedge t-\stopt_k\wedge t}},
\end{equation}
where \(\cdot\) indicates pointwise multiplication of two real variables and where we ignore the zero terms in the sum that arise when \(\stopt_k\gr{\pth}\wedge t=\stopt_{k+1}\gr{\pth}\wedge t\).
Our assumptions on the stopping times~\(\stopt_k\) and stakes~\(h_k\) ensure that \(\gr{\capital{\betstrat, c}{t}}_{t\in\posreals}\) is a real process, which we call a \emph{two-sided elementary capital process}.
We collect these capital processes in the set~\(\tscapprocs\), and denote the corresponding conditional upper and lower expectation by~\(\tscuprev{\bullet}[\bullet]\) and \(\tsclprev{\bullet}[\bullet]\), respectively.
It's easy to see that \(\tscapprocs\) includes the constant processes and is closed under scalar multiplication, but it takes a bit more work to verify that \(\tscapprocs\) is coherent and closed under pointwise addition; we don't prove this here separately, since it's a special case of \zcref{prop:oscaprocss:co co co} further on.

In the two-sided betting strategies, trader bets on the increment~\(N_{\stopt_{k+1}}-N_{\stopt_k}\) being `large'---greater than \(\lambda\gr{\stopt_{k+1}-\stopt_k}\)---when \(\stake{k}\geq0\), and on \(N_{\stopt_{k+1}}-N_{\stopt_k}\) being `small'---smaller than \(\lambda\gr{\stopt_{k+1}-\stopt_k}\)---when \(\stake{k}\leq0\).
In the one-sided betting strategies that we're going introduce next, there's a spread between the prices for these two bets.
This time around, forecaster puts forth \emph{rate bounds}~\(\llambda, \ulambda\in\posreals\) such that \(\llambda\leq\ulambda\), and trader can take a \emph{positive} stake on a series of bets~\(N_{\stopt_{k+1}}-N_{\stopt_k}-\ulambda\gr{\stopt_{k+1}-\stopt_k}\) and \(\llambda\gr{\stopt_{k+1}-\stopt_k}-\gr{N_{\stopt_{k+1}}-N_{\stopt_k}}\).
More formally, a \emph{one-sided elementary trading strategy}~\(\betstrat\) is an increasing sequence of stopping times \(\stopt_1\leq\stopt_2\leq\cdots\leq \stopt_n\leq\stopt_{n+1}\) and a sequence~\(\ustake{1}, \lstake{1}, \dots, \ustake{n}, \lstake{n}\) of variables such that for all \(k\in\set{1, \dots, n}\), the \emph{stakes}~\(\ustake{k}\) and \(\lstake{k}\) are \(\stopt_k\)-measurable (possibly partial) bounded \emph{positive} real variables whose domain includes \(\set{\stopt_k<+\infty}\).
If trader starts with initial capital~\(c\in\reals\) and follows the one-sided elementary trading strategy~\(\betstrat\), their corresponding capital at time~\(t\in\posreals\) is the real variable
\begin{multline}\label{eqn:one-sided simple capital process}
\capital{\betstrat, c}{t}
\coloneqq c+\sum_{k=1}^n \ustake{k}\cdot \gr[\big]{N_{\stopt_{k+1}\wedge t}-N_{\stopt_k\wedge t} - \ulambda\gr{\stopt_{k+1}\wedge t-\stopt_k\wedge t}}\\+\lstake{k}\cdot \gr[\big]{\llambda\gr{\stopt_{k+1}\wedge t-\stopt_k\wedge t}-N_{\stopt_{k+1}\wedge t}+N_{\stopt_k\wedge t}}.
\end{multline}
Our assumptions on the stopping times~\(\stopt_k\) and stakes~\(\ustake{k}\) and \(\lstake{k}\) ensure that \(\gr{\capital{\betstrat, c}{t}}_{t\in\posreals}\) is a real process, which we call a \emph{one-sided elementary capital process}.
We collect these capital processes in the set~\(\oscapprocs\), and denote the corresponding conditional upper and lower expectations by~\(\oscuprev{\bullet}[\bullet]\) and \(\osclprev{\bullet}[\bullet]\), respectively.

For every capital process~\(\capital{\betstrat, c}{\bullet}\in\oscapprocs\), index~\(k\in\set{1, \dots, n}\), counting path~\(\pth\in\countpaths\) and time points~\(t,r\in\cci{\stopt_k\gr{\pth}}{\stopt_{k+1}\gr{\pth}}\cap\posreals\) such that \(t\leq r\), it follows from \zcref{eqn:one-sided simple capital process} that, with \(\diffstake{k}\coloneqq\ustake{k}-\lstake{k}\),
\begin{equation}\label{eqn:one-sided capital process different increment}
\capital{\betstrat, c}{r}\gr{\pth} - \capital{\betstrat, c}{t}\gr{\pth}
= \diffstake{k}\gr{\pth}\gr[\big]{\pth\gr{r}-\pth\gr{t}-\llambda\gr{r-t}}\\-\ustake{k}\gr{\pth}\gr{\ulambda-\llambda}\gr{r-t}.
\end{equation}
This equality makes it obvious that for all \(\lambda\in\posreals\), the set~\(\oscapprocs[\set{\lambda}]\) of one-sided elementary capital processes for~\(\cci{\lambda}{\lambda}=\set{\lambda}\) is equal to the set~\(\tscapprocs\) of two-sided elementary capital processes for~\(\lambda\).

Recall from \zcref{sec:game-theoretic upper expectations} that it's crucial that the set of capital processes~\(\oscapprocs\) satisfies the conditions in \zcref{prop:gencuprev with coherence}.
\begin{proposition}\label{prop:oscaprocss:co co co}
For all \(\llambda, \ulambda\in\posreals\) with \(\llambda\leq\ulambda\), the set~\(\oscapprocs\) of one-sided elementary capital processes includes the constants, is a cone and is coherent.
\end{proposition}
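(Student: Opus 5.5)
The plan is to treat the three properties separately. The first two are bookkeeping; coherence needs an explicit adversarial path. That \(\oscapprocs\) contains the constants is immediate: take \(n=0\), or equivalently all stakes \(\ustake{k}=\lstake{k}=0\), so that \zcref{eqn:one-sided simple capital process} reduces to \(c\). Closure under multiplication by \(\mu>0\) is equally direct. Replacing \(c,\ustake{k},\lstake{k}\) by \(\mu c,\mu\ustake{k},\mu\lstake{k}\) keeps the stakes bounded, positive and \(\stopt_k\)-measurable, and it multiplies the capital by \(\mu\).

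For closure under addition, let \(\betstrat\) and \(\betstrat'\) have stopping times \(\stopt_1\leq\dots\leq\stopt_{n+1}\) and \(\stopt'_1\leq\dots\leq\stopt'_{m+1}\).
\begin{enumerate}
\item Merge the two families into a single increasing sequence \(\sigma_1\leq\dots\leq\sigma_{n+m+2}\) of their pointwise order statistics. Each order statistic is a finite max of mins of stopping times, hence a stopping time by the closure remark in \zcref{sec:setup}.
\item On each refined interval \(\cci{\sigma_j}{\sigma_{j+1}}\), define the new upper stake as
\(\sum_k \ustake{k}\indica{\set{\stopt_k\leq\sigma_j<\stopt_{k+1}}}+\sum_l \ustake{l}'\indica{\set{\stopt'_l\leq\sigma_j<\stopt'_{l+1}}}\), and define the lower stake analogously.
\item Check the stakes. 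They are bounded and positive. They are \(\sigma_j\)-measurable because \(\ustake{k}\) is \(\stopt_k\)-measurable with \(\stopt_k\leq\sigma_j\) on the relevant event, and because events of the form \(\set{\stopt_k\leq\sigma_j<\stopt_{k+1}}\) are \(\sigma_j\)-measurable by the defining property of stopping times.
\item Conclude by telescoping. Each original bet over \(\cci{\stopt_k\wedge t}{\stopt_{k+1}\wedge t}\) splits into the sum of bets over the refined subintervals it contains, so the refined strategy with initial capital \(c+c'\) reproduces \(\capital{\betstrat,c}{t}+\capital{\betstrat',c'}{t}\).
\end{enumerate}

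For coherence, fix \(\capital{\betstrat,c}{\bullet}\), \(t\) and \(\pth\), and let \(\epsilon>0\). The goal is to construct \(\vpth\in\tostop{t}{\pth}\) with \(\capital{\betstrat,c}{r}(\vpth)\leq\capital{\betstrat,c}{t}(\pth)+\epsilon\) for all \(r\geq t\). This suffices because the capital is eventually constant once \(r\geq\stopt_{n+1}(\vpth)\) (or on the last finite interval), so its \(\liminf\) is then at most that bound.
\begin{enumerate}
\item Build \(\vpth\) by sequentially extending \(\pth\vert_{\cci{0}{t}}\) interval by interval, using \zcref{eqn:one-sided capital process different increment} on each \(\cci{\stopt_k}{\stopt_{k+1}}\).
\item If \(\diffstake{k}\geq0\), let market make no jumps. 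The increment is then \(-\diffstake{k}\llambda(r-t)-\ustake{k}(\ulambda-\llambda)(r-t)\leq0\).
\item If \(\diffstake{k}<0\), let market jump once at a time \(\delta_k\) after the start of the interval and thereafter at spacing \(1/\llambda\), or at spacing arbitrarily small if \(\llambda=0\). This keeps \(N_r-N_t\geq\llambda(r-t)-\llambda\delta_k\). The increment is then at most \(\abs{\diffstake{k}}\llambda\delta_k\), which is made smaller than \(\epsilon/n\) by choosing \(\delta_k\) small, using boundedness of the stakes.
\item Since there are only \(n\) bets, the total overshoot is at most \(\epsilon\).
\end{enumerate}

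The main obstacle I expect is making this sequential construction rigorous with respect to stopping times. The values \(\stopt_{k+1}(\vpth)\) depend on the path being constructed. I would handle this by building paths \(\vpth_k\) that agree up to the relevant stopping time and invoking the defining property of stopping times: once the path is fixed on \(\cci{0}{\stopt_k(\vpth_k)}\), the value of \(\stopt_k\) cannot change under later extensions. This is also why jumps are delayed by \(\delta_k>0\) rather than placed exactly at \(\stopt_k\), since a jump at \(\stopt_k\) itself could alter that stopping time. Intervals with \(\stopt_k=+\infty\) simply terminate the construction.
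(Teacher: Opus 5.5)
Your proposal is correct and follows essentially the same route as the paper. Constants and positive scaling are immediate. Coherence comes from recursively building an adversarial path in \(\tostop{t}{\pth}\): it stays flat on a bet with \(\diffstake{k}\geq0\), and on a bet with \(\diffstake{k}<0\) it jumps at rate about \(\llambda\) after a short delay \(\delta_k\). Each of the \(n\) bets then raises the capital by at most \(\epsilon/n\), and the stopping-time property guarantees that earlier stopping times and stakes are unaffected by later modifications. Your common-refinement argument for closure under addition, with merged order statistics \(\sigma_j\) and stakes \(\ustake{k}\indica{\set{\stopt_k\leq\sigma_j<\stopt_{k+1}}}\), soundly fills in a step that the paper only calls `intuitively clear'.
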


\section{Properties of the Conditional Upper Expectation for the Poisson Process}\label{sec:properties}
In the remainder of this contribution, I list some properties of the conditional upper expectation operator~\(\oscuprev{\bullet}[\bullet]\) which I hope should convince the reader of the sensibility of the game-theoretic characterisation of the imprecise Poisson process.

\subsection{Properties Related to the Rate Bounds}
The first property looks at nested rate bounds: the conditional upper expectation for~\(\cci{\llambdai}{\ulambdai}\) is dominated by the one for~\(\cci{\llambdao}{\ulambdao}\) whenever \(\cci{\llambdai}{\ulambdai}\subseteq\cci{\llambdao}{\ulambdao}\).
\begin{proposition}\label{prop:monotonicity in lambda}
For all \(\llambdao,\llambdai, \ulambdai, \ulambdao\in\posreals\) with \(\llambdao\leq\llambdai\leq\ulambdai\leq\ulambdao\),
\begin{equation*}
\oscuprev[\cci{\llambdai}{\ulambdai}]{f}
\leq \oscuprev[\cci{\llambdao}{\ulambdao}]{f}
\quad\text{for all } \stopt\in\stopts, f\in\stoptdom.
\end{equation*}
\end{proposition}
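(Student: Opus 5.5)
The plan is to show that every capital process allowed under the outer rate bounds is also allowed under the inner rate bounds, \(\oscapprocs[\cci{\llambdao}{\ulambdao}]\subseteq\oscapprocs[\cci{\llambdai}{\ulambdai}]\). The infimum defining \(\oscuprev[\cci{\llambdai}{\ulambdai}]{f}\gr{\pth}\) then ranges over a larger set than the one defining \(\oscuprev[\cci{\llambdao}{\ulambdao}]{f}\gr{\pth}\). The conditions \(\inf\gencapital_\bullet>-\infty\) and \(\liminf\gencapital_\bullet\geq_{\tostop{\stopt}{\pth}}f\) depend only on the process itself and not on which set it belongs to. Hence the inner infimum is at most the outer one, for every \(\stopt\in\stopts\), \(f\in\stoptdom\) and \(\pth\in\set{\stopt<+\infty}\).

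For the inclusion, I take a one-sided elementary trading strategy \(\betstrat\) for \(\cci{\llambdao}{\ulambdao}\), with stopping times \(\stopt_1\leq\dots\leq\stopt_{n+1}\), positive stakes \(\ustake{k},\lstake{k}\) and initial capital \(c\). Fix \(k\) and write \(\Delta N\coloneqq N_{\stopt_{k+1}\wedge t}-N_{\stopt_k\wedge t}\) and \(\Delta t\coloneqq\stopt_{k+1}\wedge t-\stopt_k\wedge t\), so \(\Delta t\geq0\). Then
\begin{equation*}
\ustake{k}\gr{\Delta N-\ulambdao\Delta t}+\lstake{k}\gr{\llambdao\Delta t-\Delta N}
=\ustake{k}\gr{\Delta N-\ulambdai\Delta t}+\lstake{k}\gr{\llambdai\Delta t-\Delta N}-\gr[\big]{\ustake{k}\gr{\ulambdao-\ulambdai}+\lstake{k}\gr{\llambdai-\llambdao}}\Delta t .
\end{equation*}
The first two terms are exactly the inner-rate bets with the same stakes. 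The extra term is a nonpositive drift, \(-m_k\Delta t\), where \(m_k\coloneqq\ustake{k}\gr{\ulambdao-\ulambdai}+\lstake{k}\gr{\llambdai-\llambdao}\). The variable \(m_k\) is bounded, positive and \(\stopt_k\)-measurable because the stakes are.

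The main obstacle is that this pure time-drift is not literally an elementary bet in the sense of \zcref{eqn:one-sided simple capital process}. I would absorb it into the inner strategy by exploiting that the two inner bets sum to a deterministic loss, \(\gr{\Delta N-\ulambdai\Delta t}+\gr{\llambdai\Delta t-\Delta N}=-\gr{\ulambdai-\llambdai}\Delta t\).

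\emph{Case \(\llambdai<\ulambdai\).} Add the stake \(m_k/\gr{\ulambdai-\llambdai}\) to both \(\ustake{k}\) and \(\lstake{k}\). The new stakes remain bounded, positive and \(\stopt_k\)-measurable, and the resulting inner capital process coincides with the outer one at all \(t\).

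\emph{Case \(\llambdai=\ulambdai\eqqcolon\lambda\).} This combination is identically zero, so the drift cannot be produced this way. Here I would instead compare directly with the defining infimum, arguing that the outer capital is dominated by an inner capital process with the same starting value. Concretely, discarding the drift yields the inner process \(\capital{\betstrat',c}{\bullet}\) with \(\capital{\betstrat',c}{\bullet}\geq\capital{\betstrat,c}{\bullet}\) pointwise, because \(m_k\Delta t\geq0\). Hence \(\inf\capital{\betstrat',c}{\bullet}>-\infty\) and \(\liminf\capital{\betstrat',c}{\bullet}\geq\liminf\capital{\betstrat,c}{\bullet}\geq_{\tostop{\stopt}{\pth}}f\).

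Whether the drift is removed or the stakes are adjusted, the two processes agree at \(\stopt\) only up to the accumulated drift \(\sum_k m_k\Delta t\) evaluated at \(t=\stopt\gr{\pth}\). I therefore shift the inner process by a constant to match \(\gencapital_\stopt\gr{\pth}\) exactly. This is legitimate because the inner set contains constants and is a cone by \zcref{prop:oscaprocss:co co co}. Pointwise domination is preserved only after time \(\stopt\gr{\pth}\). To handle this, I would start all bets at \(\stopt\vee\stopt_k\) rather than \(\stopt_k\), replacing the stopping times \(\stopt_k\) by \(\stopt_k\vee\stopt\), which are again stopping times; this makes the accumulated drift before \(\stopt\) vanish on \(\tostop{\stopt}{\pth}\).

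Making this restart rigorous, while keeping the stakes \(\stopt_k\vee\stopt\)-measurable, is the step I expect to need the most care. Once it is done, the general case reduces to the simple inclusion argument from the first paragraph.
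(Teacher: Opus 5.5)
Your proposal is correct, and it argues differently from the paper. For \(\llambdai<\ulambdai\), adding \(m_k/\gr{\ulambdai-\llambdai}\) to both stakes reproduces the outer capital process exactly. This gives the inclusion \(\oscapprocs[\cci{\llambdao}{\ulambdao}]\subseteq\oscapprocs[\cci{\llambdai}{\ulambdai}]\), which does not depend on \(\stopt\), \(\pth\) or \(f\) and is stronger than needed.

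The paper makes no case split. It fixes \(\pth\) and restarts the outer strategy at the stopping times \(\stopt_k\vee\stopt\), with initial capital \(\capital{\betstrat,c}{\stopt}\gr{\pth}\) and the same stakes. It then observes that on \(\tostop{\stopt}{\pth}\) the inner process exceeds the outer one by the nonnegative drift \(\sum_k m_k\Delta t\) accumulated after \(\stopt\gr{\pth}\). Your stake trick yields an identity of capital processes. The paper's pathwise dominance is uniform and covers \(\llambdai=\ulambdai\) at no extra cost.

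In the degenerate case, the restart you single out as the delicate step is unnecessary. Your drop-the-drift-and-shift construction already finishes the proof, and it would also work for \(\llambdai<\ulambdai\).
\begin{itemize}
\item Put \(D_t\coloneqq\capital{\betstrat',c}{t}-\capital{\betstrat,c}{t}=\sum_k m_k\Delta t\). It is nonnegative, nondecreasing in \(t\) and \(t\)-measurable.
\item So \(D\coloneqq D_{\stopt\gr{\pth}}\gr{\pth}\) is a real number that equals \(D_{\stopt\gr{\pth}}\gr{\vpth}\) for every \(\vpth\in\tostop{\stopt}{\pth}\).
\item Hence \(\capital{\betstrat',c-D}{\bullet}=\capital{\betstrat,c}{\bullet}+D_\bullet-D\) is bounded below by \(\inf\capital{\betstrat,c}{\bullet}-D\) on all of \(\countpaths\).
\item It equals \(\capital{\betstrat,c}{\stopt}\gr{\pth}\) at time \(\stopt\gr{\pth}\) along \(\pth\).
\item It dominates \(\capital{\betstrat,c}{\bullet}\) on \(\tostop{\stopt}{\pth}\) from time \(\stopt\gr{\pth}\) on.
\end{itemize}
Since \(\liminf\) only sees the tail, nothing more is required.

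If you do restart at \(\stopt_k\vee\stopt\) with initial capital \(\capital{\betstrat,c}{\stopt}\gr{\pth}\), as the paper does, global boundedness below can fail off \(\tostop{\stopt}{\pth}\). There the process is \(\capital{\betstrat,c}{\stopt}\gr{\pth}\) plus the inner-rate increments after \(\stopt\gr{\vpth}\), and these need not be bounded below. For example, take \(\ulambdao=\ulambdai=1\), the outer process \(N_{t\wedge\sigma}-\gr{t\wedge\sigma}\) with \(\sigma\) the first time it reaches \(-1\), and \(\stopt\coloneqq1\). On a path with \(M\) jumps before time \(1\) and none after, the restarted process equals \(\capital{\betstrat,c}{1}\gr{\pth}-M\) at time \(M+1\). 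The remedy, left implicit in the paper, is to set the stakes to zero outside \(\tostop{\stopt}{\pth}\). This is allowed because \(\indica{\tostop{\stopt}{\pth}}\) is \(\stopt\)-measurable.

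Two minor slips. After the stake adjustment the processes coincide, so no shift is needed there. And the degenerate case rests on this pathwise dominance, not on an inclusion of process sets.
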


The second property involves the interpretation of the rate bounds~\(\ulambda\) and \(\llambda\) as proportionality constants for forecaster's selling and buying prices.
Since, as explained in \zcref{sec:game-theoretic upper expectations}, \(\oscuprev{N_\stopt-N_\sigma}[\sigma]\gr{\pth}\) can be interpreted as trader's infimum selling price for the uncertain increment~\(N_\stopt-N_\sigma\) given the information they have at time~\(\sigma\gr{\pth}\), one would expect that like forecaster's price, this is proportional to~\(\ulambda\) ; the next result confirms that this is indeed the case, at least when these stopping times are \emph{constant}.
\begin{proposition}\label{prop:expected increment}
For any two time points~\(s, t\in\posreals\) such that \(s\leq t\),
\begin{equation*}
\oscuprev{N_t-N_s}[s]
= \ulambda\gr{t-s}
\quad\text{and}\quad
\osclprev{N_t-N_s}[s]
= \llambda\gr{t-s}.
\end{equation*}
\end{proposition}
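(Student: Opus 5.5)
We prove the statement for the upper expectation; the one for the lower expectation then follows by the same argument with the roles of the two bets swapped (or via conjugacy). We show two inequalities.

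\emph{Upper bound.} We exhibit a capital process that superhedges \(f\coloneqq N_t-N_s\) starting from \(\ulambda(t-s)\). Take the one-sided elementary strategy with \(\stopt_1\coloneqq s\), \(\stopt_2\coloneqq t\), stakes \(\ustake{1}\coloneqq1\) and \(\lstake{1}\coloneqq0\), and initial capital \(c\coloneqq\ulambda(t-s)\). By \zcref{eqn:one-sided simple capital process}, at every time \(r\geq t\) the capital is
\[
\capital{\betstrat, c}{r}=\ulambda(t-s)+N_t-N_s-\ulambda(t-s)=N_t-N_s .
\]
Hence \(\liminf\capital{\betstrat, c}{\bullet}=f\) everywhere. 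We need this process to be bounded below. Since \(N_\bullet\) is increasing, \(\capital{\betstrat, c}{r}\geq\ulambda(t-s)-\ulambda(r\wedge t-s\wedge r)\geq0\) for all \(r\), so it is bounded below by \(0\). Moreover \(\capital{\betstrat, c}{s}=c\) on every path, so the definition of \(\oscuprev{f}[s]\) gives \(\oscuprev{N_t-N_s}[s]\leq\ulambda(t-s)\).

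\emph{Lower bound.} This is the main step. Fix \(\pth\) and a capital process \(\gencapital_\bullet\in\oscapprocs\) with \(\inf\gencapital_\bullet>-\infty\) and \(\liminf\gencapital_\bullet\geq_{\tostop{s}{\pth}}N_t-N_s\). We must show \(\gencapital_s(\pth)\geq\ulambda(t-s)\). The idea is to combine \(\gencapital_\bullet\) with the strategy above, but traded in the opposite direction. The process \(\gencapital'_r\coloneqq\gencapital_r-\gr{\ulambda(t-s)\wedge\dots}\) is not directly available, because a negative stake on the \(\ulambda\)-bet is not an allowed one-sided bet. Instead, use the \(\llambda\)-bet with \(\ulambda\) replaced suitably. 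Concretely, I would apply coherence \zcref{eq:coherence condition} not in \(\oscapprocs\) itself but in \(\oscapprocs[\set{\ulambda}]=\tscapprocs[\ulambda]\). By \zcref{eqn:one-sided capital process different increment}, every process in \(\oscapprocs\) is dominated pathwise, increment by increment, by a process in \(\oscapprocs[\set{\ulambda}]\) with the same initial capital: the term \(\diffstake{k}(\pth(r)-\pth(t)-\llambda(r-t))-\ustake{k}(\ulambda-\llambda)(r-t)\) is at most \(\diffstake{k}(\pth(r)-\pth(t)-\ulambda(r-t))\) when \(\diffstake{k}\leq\ustake{k}\), which holds because \(\lstake{k}\geq0\). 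This domination is essentially the content of \zcref{prop:monotonicity in lambda}, so I would either invoke that result or reprove this domination. The two-sided class \(\tscapprocs[\ulambda]\) is a cone containing the constants and is coherent, by \zcref{prop:oscaprocss:co co co}. It is therefore closed under subtracting the two-sided process that starts at \(\ulambda(t-s)\), bets stake \(1\) on \([s,t]\), and has limit \(N_t-N_s\). The resulting process in \(\tscapprocs[\ulambda]\) has \(\liminf\geq0\) on \(\tostop{s}{\pth}\), and its value at \(s\) is at most \(\gencapital_s(\pth)-\ulambda(t-s)\). Coherence at time \(s\) then gives \(\gencapital_s(\pth)-\ulambda(t-s)\geq0\).

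The delicate points are checking that the dominating process preserves \(\liminf\geq N_t-N_s\) and the lower-boundedness that coherence requires, and making sure coherence is applied to a process that is in the class. If boundedness causes trouble, I would instead use the fact that \(\oscuprev[\set{\ulambda}]{\cdot}[s]\leq\oscuprev{\cdot}[s]\) from \zcref{prop:monotonicity in lambda}. This reduces the problem to the precise case, where \(\tscapprocs[\ulambda]\) is closed under negation, so that E6 and E7 of \zcref{prop:gencuprev with coherence} give
\[
\ulambda(t-s)=-\tscuprev[\ulambda]{-(N_t-N_s)}[s]\leq\tscuprev[\ulambda]{N_t-N_s}[s],
\]
using the explicit superhedge of \(-(N_t-N_s)\), which is bounded below on the bounded horizon. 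The same reduction with \(\llambda\) handles the lower expectation.
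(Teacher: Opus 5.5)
\paragraph{A genuine gap in the lower expectation.}
The failing step is the inequality $\osclprev{N_t-N_s}[s]\geq\llambda\gr{t-s}$, that is, $\oscuprev{-\gr{N_t-N_s}}[s]\leq-\llambda\gr{t-s}$. This is an upper bound on an infimum, so you need an explicit bounded-below superhedge. Neither of your suggestions supplies one.

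Conjugacy only rewrites $\osclprev{N_t-N_s}[s]$ as $-\oscuprev{-\gr{N_t-N_s}}[s]$. No symmetry relates this to $\oscuprev{N_t-N_s}[s]$.

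Swapping the bets gives initial capital $-\llambda\gr{t-s}$ and $\lstake{1}=1$ on $\cci{s}{t}$. Its capital is $-\llambda\gr{t-r}-\gr{N_r-N_s}$ for $s<r\leq t$. This is unbounded below, because counting paths, even those in $\tostop{s}{\pth}$, can jump arbitrarily often in $\cci{s}{t}$. Your lower-boundedness argument for the upper bound relied on $N_\bullet$ being increasing, and here that works against you.

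For the same reason, the claim in your fallback that the explicit superhedge of $-\gr{N_t-N_s}$ is `bounded below on the bounded horizon' is false. So the fallback fails, and so does your closing sentence about $\llambda$.

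The missing idea is the paper's truncation:
\begin{enumerate}
\item Stop the $\llambda$-bet at the first time after $s$ at which the path has risen by $m$, or at $t$ if that comes first. The resulting process is bounded below by $-\llambda\gr{t-s}-m$ and superhedges $-\gr{N_t-N_s}$ on $\set{N_t<N_s+m}$.
\item Add $\llambda\gr{t-s}$ times your nonnegative $\ulambda$-process with stake $1/m$. This covers $\set{N_t\geq N_s+m}$.
\item The total price is $-\llambda\gr{t-s}+\llambda\ulambda\gr{t-s}^2/m$; now let $m\to+\infty$.
\end{enumerate}

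\paragraph{Your coherence route for the upper expectation.}
Your primary route to $\oscuprev{N_t-N_s}[s]\geq\ulambda\gr{t-s}$ is a genuinely different and workable alternative. The paper instead obtains this from $\ulambda\gr{t-s}\leq\tsclprev[\ulambda]{N_t-N_s}[s]\leq\tscuprev[\ulambda]{N_t-N_s}[s]$ together with \zcref{prop:monotonicity in lambda}, and that chain again rests on the truncation. Your route applies coherence directly and needs no truncation. Its swapped form also gives the easy half $\osclprev{N_t-N_s}[s]\leq\llambda\gr{t-s}$.

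It needs two repairs.
\begin{enumerate}
\item Domination `with the same initial capital' goes the wrong way at time $s$. The $\lstake{k}$-bets placed before $s$ earn an extra $\gr{\ulambda-\llambda}$ drift, so the dominating process at $s$ is at least $\gencapital_s\gr{\pth}$, not at most. You must re-base at $s$ as in the proof of \zcref{prop:monotonicity in lambda}: use stopping times $\stopt_k\vee s$ and initial capital $\gencapital_s\gr{\pth}$. Then the two processes agree at $s$ on $\tostop{s}{\pth}$.
\item Your worry about lower-boundedness is unfounded. The condition \eqref{eq:coherence condition} is imposed on every process in the class, bounded below or not. So coherence of $\tscapprocs[\ulambda]$ applies directly to the unbounded difference process. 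Note that $\tscapprocs[\ulambda]$ is closed under subtraction because two-sided stakes may be negative; this does not follow from its being a cone.
\end{enumerate}
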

\begin{proof}
We prove first that
\begin{equation}\label{eq:bounds on expected increment}
\oscuprev{N_t-N_s}[s]
\leq \ulambda\gr{t-s}
\quad\text{and}\quad
\oscuprev{-\gr{N_t-N_s}}[s]
\leq -\llambda\gr{t-s}.
\end{equation}

Fix any \(m\in\nats\), let \(c_m\coloneqq\ulambda\gr{t-s}/m\) and observe that for the trading strategy~\(\betstrat_m\) with stopping times~\(\stopt_1\coloneqq s\) and \(\stopt_2\coloneqq t\) and stakes~\(\ustake{1}\coloneqq 1/m\) and \(\lstake{1}\coloneqq 0\),
\begin{equation*}
\capital{\betstrat_m, c_m}{r}
= \begin{cases}
\frac1m\ulambda\gr{t-s} &\text{if } r\leq  s \\
\frac1m\gr[\big]{\ulambda\gr{t-r}+N_r-N_s} &\text{if } s<r<t \\
\frac1m\gr{N_t-N_s} &\text{if } r\geq t
\end{cases}
\quad\text{for all } r\in\posreals.
\end{equation*}
Since \(\capital{\betstrat_m, c_m}{\bullet}\) is non-negative and superhedges~\(\gr{N_t-N_s}/m\), we've shown that
\begin{equation*}
\oscuprev{\gr{N_t-N_s}/m}[s]
\leq \frac1m\ulambda\gr{t-s}.
\end{equation*}
Setting \(m=1\) gives the first inequality in~\eqref{eq:bounds on expected increment}.

Proving the second inequality is a bit more involved.
Let \(c\coloneqq-\llambda\gr{t-s}\).
For all \(m\in\nats\), consider the trading strategy~\(\betstrat'_m\) with \(\stopt^m_1=s\), \(\ustake{1}^m\coloneqq0\), \(\lstake{1}^m\coloneqq1\) and
\begin{equation*}
\stopt^m_2
\colon \countpaths\to\posextreals
\colon \pth\mapsto \inf\set{r\in\coi{s}{+\infty}\colon \pth\gr{r}\geq\pth\gr{s}+m}\cup\set{t}.
\end{equation*}
Then for all \(\pth\in\countpaths\) and \(r\in\posreals\),
\begin{equation*}
\capital{\betstrat'_m, c}{r}\gr{\pth}
= \begin{cases}
-\llambda\gr{t-s} &\text{if } r\leq s \\
-\llambda\gr{t-r} -\gr[\big]{\pth\gr{r}-\pth\gr{s}} &\text{if } s<r<\stopt^m_2\gr{\pth} \\
-\gr[\big]{\pth\gr{t}-\pth\gr{s}} &\text{if } r\geq \stopt^m_2\gr{\pth}=t \\
-\llambda\gr{t-\stopt^m_2\gr{\pth}} - m &\text{if  } r\geq \stopt^m_2\gr{\pth}<t.
\end{cases}
\end{equation*}
This equality tells us that \(\capital{\betstrat'_m, c}{\bullet}\) is bounded below and superhedges~\(-\gr{N_t-N_s}\) on~\(\set{\pth\in\countpaths\colon \pth\gr{t}<\pth\gr{s}+m}\).
Since \(\capital{\betstrat_m, c_m}{\bullet}\) is bounded below and superhedges~\(\gr{N_t-N_s}/m\), it follows that \(\capital{\betstrat'_m, c}{\bullet}+\llambda\gr{t-s}\capital{\betstrat_m, c_m}{\bullet}\) is bounded below and superhedges~\(-\gr{N_t-N_s}\).
Consequently,
\begin{equation*}
\oscuprev{-\gr{N_t-N_s}}[s]
\leq \capital{\betstrat'_m, c}{s}+\llambda\gr{t-s}\capital{\betstrat_m, c_m}{s}
= -\llambda\gr{t-s}+\frac1m\llambda\ulambda\gr{t-s}^2.
\end{equation*}
Since this inequality holds for arbitrary~\(m\in\nats\), it implies the second inequality in~\eqref{eq:bounds on expected increment}.

If \(\llambda=\lambda=\ulambda\), we infer from the two inequalities in \eqref{eq:bounds on expected increment} that
\begin{equation*}
\lambda\gr{t-s}
\leq
\tsclprev{N_t-N_s}[s]
\leq \tscuprev{N_t-N_s}[s]
\leq \lambda\gr{t-s}.
\end{equation*}
If \(\llambda<\ulambda\), the equality follows from \eqref{eq:bounds on expected increment} and the equality above for \(\lambda=\ulambda\) and \(\lambda=\llambda\) due to \zcref{prop:monotonicity in lambda} [with \(\llambdai=\lambda=\ulambdai\), \(\llambdao=\llambda\) and \(\ulambdao=\ulambda\)].
\qed\end{proof}

The next property we turn to is the alternative characterisation of the rate parameter~\(\lambda\) of the `classical' Poisson process as the inverse of the expected time until the next increment.
The time until the next increment after the stopping time~\(\stopt\in\stopts\) is the partial variable
\begin{equation*}
\rho_\stopt
\colon \set{\stopt<+\infty}\to\posextreals\colon
\pth\mapsto \inf\set{r\in\ooi{\stopt\gr{\pth}}{+\infty}\colon \pth\gr{r}>\pth\gr{\stopt}};
\end{equation*}
note that we can extend the domain of~\(\rho_\stopt\) to~\(\countpaths\) by setting it equal to~\(+\infty\) on~\(\set{\stopt<+\infty}^\mathrm{c}\), which makes this variable a stopping time.
It's fairly easy to verify that the upper and lower expectation of~\(\rho_\stopt-\stopt\) conditional on~\(\stopt\) are the inverse of the forecaster's rate bounds.
\begin{proposition}\label{prop:expected renewal time}
For every stopping time~\(\stopt\in\stopts\),
\begin{equation*}
\oscuprev{\rho_\stopt-\stopt}
= \frac1{\llambda}
\quad\text{and}\quad
\osclprev{\rho_\stopt-\stopt}
= \frac1{\ulambda},
\end{equation*}
where here we follow the convention~\(1/0=+\infty\).
\end{proposition}
\ifipmu
The proof is similar to but a bit more involved as the one for \zcref{prop:expected increment}, and therefore omitted.
\else
Since the proof for this result is similar to but a bit more involved as the one for \zcref{prop:expected increment}, it's been relegated to \zcref{asec:proof of expected renewal time}.
\fi

\subsection{The Strong Markov Property}
The next property of the `classical' Poisson process on the list is that it satisfies the (strong) Markov property.
We'll actually show a stronger `memorylessness'-like result first: that the conditional upper expectation~\(\oscuprev{\bullet}\) can be `shifted' from the stopping time~\(\stopt\) to \(0\)---intuitively, this is a straightforward consequence of the fact that trading strategies can be `shifted'.
In the formal statement of this result, we need to stitch together two counting paths; following \cite{2017Neufeld}, we stitch together paths~\(\pth,\vpth\in\countpaths\) at the stopping time~\(\stopt\in\stopts\) as follows:
\begin{equation*}
\pth\oplus_\stopt\vpth
\colon \posreals\to\posints
\colon t\mapsto \begin{cases}
\pth\gr{t} & \text{if } t<\stopt\gr{\pth},\\
\pth\gr{\stopt\gr{\pth}} + \vpth\gr{t-\stopt\gr{\pth}} & \text{if } t\geq \stopt\gr{\pth};
\end{cases}
\end{equation*}
the reader will have no trouble verifying that \(\pth\oplus_\stopt\vpth\) is indeed a counting path.
Similarly, for any time point~\(s\in\posreals\) and counting path~\(\pth\in\countpaths\), we'll need to look at the derived counting path
\begin{equation*}
\pth_{\coi{s}{+\infty}}
\colon \posreals\to\posints
\colon t\mapsto \pth\gr{t+s}-\pth\gr{s}
\end{equation*}
which looks only at the increments of~\(\pth\) starting from~\(s\).
\begin{proposition}\label{prop:shifted upper expectation}
For all \(\stopt\in\stopts\), \(f\in\stoptdom\) and \(\pth\in\set{\stopt<+\infty}\),
\begin{equation*}
\oscuprev{f}\gr{\pth}
= \osuprev{f\gr{\pth\oplus_\stopt \bullet}}
\quad\text{where }
f\gr{\pth\oplus_\stopt \bullet}
\colon \countpaths\to\extreals
\colon \vpth \mapsto f\gr{\pth\oplus_\stopt \vpth}.
\end{equation*}
\end{proposition}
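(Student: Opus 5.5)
We prove the two inequalities separately, each by transporting a capital process through a shift map: from the future after \(\stopt\gr{\pth}\) back to time \(0\) for one direction, and forward for the other. Fix \(\pth\in\set{\stopt<+\infty}\) and write \(s\coloneqq\stopt\gr{\pth}\). Two structural facts are used throughout.

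First, the map \(\vpth\mapsto\pth\oplus_\stopt\vpth\) is a bijection from \(\countpaths\) onto \(\tostop{\stopt}{\pth}\). Indeed, since \(\stopt\) is a stopping time, every \(\vpth'\in\tostop{\stopt}{\pth}\) has \(\stopt\gr{\vpth'}=s\). Moreover \(\vpth'=\pth\oplus_\stopt\vpth'_{\coi{s}{+\infty}}\), and \(\gr{\pth\oplus_\stopt\vpth}_{\coi{s}{+\infty}}=\vpth\).

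Second, compensated increments are shift-invariant. For \(u\ge 0\),
\[N_{s+u}\gr{\pth\oplus_\stopt\vpth}-N_s\gr{\pth\oplus_\stopt\vpth}=\vpth\gr{u}.\]

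\emph{Inequality \(\le\).} Take \(\gencapital_\bullet=\capital{\betstrat,c}{\bullet}\in\oscapprocs\), bounded below, with \(\liminf\gencapital_\bullet\ge f\gr{\pth\oplus_\stopt\bullet}\) on \(\countpaths\). We build \(\betstrat'\) that does nothing before \(\stopt\) and afterwards copies \(\betstrat\) on the shifted path. Concretely, set \(\stopt'_0\coloneqq\stopt\), and for \(\vpth'\in\set{\stopt<+\infty}\) define
\[\stopt'_k\gr{\vpth'}\coloneqq\stopt\gr{\vpth'}+\stopt_k\gr{\vpth'_{\coi{\stopt(\vpth')}{+\infty}}},\]
with value \(+\infty\) off \(\set{\stopt<+\infty}\). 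The stakes are \(\ustake{k}'\gr{\vpth'}\coloneqq\ustake{k}\gr{\vpth'_{\coi{\stopt(\vpth')}{+\infty}}}\), and \(\lstake{k}'\) similarly. Two things must be checked here.
\begin{enumerate}[label=(\roman*)]
\item The \(\stopt'_k\) are stopping times, \(\stopt'_k\)-measurable stakes, increasing and bounded. This follows from the definitions by comparing restrictions of paths; it is the part I expect to be the most tedious but routine.
\item By the shift invariance and \zcref{eqn:one-sided simple capital process}, \(\capital{\betstrat',c}{r}\gr{\pth\oplus_\stopt\vpth}=\capital{\betstrat,c}{r-s}\gr{\vpth}\) for \(r\ge s\).
\end{enumerate}
It remains to make \(\betstrat'\) usable on all of \(\countpaths\). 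For paths \(\vpth'\) outside \(\tostop{\stopt}{\pth}\), the inequality \(\liminf\ge f\) is not required there. Still, the process must be bounded below and should start at the same constant. Both hold because \(\capital{\betstrat',c}{\bullet}\) equals \(c\) up to \(\stopt\), and afterwards equals a shifted copy of \(\gencapital_\bullet\) on whatever path we follow. Hence \(\inf\ge\inf\gencapital_\bullet\). Then \(\capital{\betstrat',c}{s}\gr{\pth}=c=\gencapital_0\). By the bijection, the superhedging condition holds on \(\tostop{\stopt}{\pth}\), since the \(\liminf\) is unaffected by the time shift. Taking the infimum gives \(\oscuprev{f}\gr{\pth}\le\osuprev{f\gr{\pth\oplus_\stopt\bullet}}\).

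\emph{Inequality \(\ge\).} Take \(\capital{\betstrat,c}{\bullet}\) bounded below with \(\liminf\ge f\) on \(\tostop{\stopt}{\pth}\). Define the reverse shift
\[\stopt''_k\gr{\vpth}\coloneqq\gr{\stopt_k\gr{\pth\oplus_\stopt\vpth}-s}\vee 0,\qquad \ustake{k}''\gr{\vpth}\coloneqq\ustake{k}\gr{\pth\oplus_\stopt\vpth},\]
and \(\lstake{k}''\) similarly. Take the initial capital \(c''\coloneqq\capital{\betstrat,c}{s}\gr{\pth}\), which is a constant since \(\capital{\betstrat,c}{s}\) is \(s\)-measurable. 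Checking that \(\stopt''_k\) is a stopping time and \(\stake{k}''\) is \(\stopt''_k\)-measurable uses the fact that \(\pth\oplus_\stopt\vpth\) restricted to \(\cci{0}{s+u}\) depends only on \(\vpth\) restricted to \(\cci{0}{u}\). Applying \zcref{eqn:one-sided capital process different increment} term by term then gives
\[\capital{\betstrat'',c''}{u}\gr{\vpth}=\capital{\betstrat,c}{s+u}\gr{\pth\oplus_\stopt\vpth}.\]
Stakes active across time \(s\) are truncated correctly, because only the increments after \(s\) contribute. Consequently the new process is bounded below and superhedges \(f\gr{\pth\oplus_\stopt\bullet}\) everywhere, which yields the reverse inequality.

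The main obstacle is the measurability bookkeeping in (i): showing that the concatenated times \(\stopt+\stopt_k\circ\text{shift}\) are stopping times with appropriately measurable stakes. I would verify it directly from the definition, using that \(\stopt\) is constant on \(\tostop{\stopt}{\vpth'}\).
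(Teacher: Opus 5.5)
Your proposal is correct and follows the paper's proof. Both directions transport a capital process via the shift: forward with \(\stopt'_k=\stopt+\stopt_k\circ(\text{shift})\) and shifted stakes, and backward with \(\stopt''_k=(\stopt_k(\pth\oplus_\stopt\bullet)-\stopt(\pth))\vee 0\), stakes \(\ustake{k}(\pth\oplus_\stopt\bullet)\) and initial capital \(\capital{\betstrat,c}{\stopt}(\pth)\). The one cosmetic difference is that in the first direction you shift at \(\stopt(\vpth')\) on every path of \(\set{\stopt<+\infty}\), whereas the paper shifts at the fixed time \(\stopt(\pth)\) on \(\tostop{\stopt}{\pth}\) and sets the new stopping times to \(+\infty\) elsewhere; both choices give valid, bounded-below strategies.
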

\begin{proof}
We'll verify the equality in the statement by proving that the left-hand side of the equality is lower than or equal to the right-hand side and vice versa.

First, we show that the left-hand side of the equality is lower than or equal to the right-hand side.
Consider any bounded below capital process~\(\capital{\betstrat, c}{\bullet}\in\oscapprocs\) that superhedges \(f\gr{\pth\oplus\bullet}\).
Let us enumerate the stopping times and stakes of~\(\betstrat\) as \(\stopt_1\), \dots, \(\stopt_{n+1}\) and \(\ustake{1}\), \dots, \(\lstake{n+1}\), respectively.
Consider the trading strategy~\(\betstrat'\) with stopping times \(\stopt_1'\), \dots, \(\stopt'_{n+1}\) defined for all \(k\in\set{1, \dots, n+1}\) as
\begin{equation*}
\stopt'_k
\colon \countpaths\to\posextreals
\colon \vpth\mapsto \begin{cases}
\stopt\gr{\pth}+\stopt_k\gr{\vpth_{\coi{\stopt\gr{\pth}}{+\infty}}} &\text{if } \vpth\in\tostop{\stopt}{\pth} \\
+\infty & \text{if } \vpth\notin\tostop{\stopt}{\pth}
\end{cases}
\end{equation*}
and, for all \(k\in\set{1, \dots, n}\), stakes defined on~\(\tostop{\stopt}{\pth}\) by
\begin{equation*}
\ustake{k}'\gr{\vpth}
\coloneqq \ustake{k}\gr{\vpth_{\coi{\stopt\gr{\pth}}{+\infty}}}
\quad\text{and}\quad
\lstake{k}'\gr{\vpth}
\coloneqq \lstake{k}\gr{\vpth_{\coi{\stopt\gr{\pth}}{+\infty}}}
\quad\text{for all } \vpth\in\tostop{\stopt}{\pth}.
\end{equation*}
Our construction ensures that \(\capital{\betstrat', c}{\bullet}\in\oscapprocs\) is bounded below with \(\capital{\betstrat', c}{\stopt}\gr{\pth}=c\) and
\begin{equation*}
\capital{\betstrat', c}{r}\gr{\vpth}
= \capital{\betstrat, c}{r-\stopt\gr{\pth}}\gr{\vpth_{\coi{\stopt\gr{\pth}}{+\infty}}}
\quad\text{for all } \vpth\in\tostop{\stopt}{\pth}, r\in\coi{\stopt\gr{\pth}}{+\infty}.
\end{equation*}
For all \(\vpth\in\tostop{\stopt}{\pth}\), we infer from all this that
\begin{equation*}
\liminf\capital{\betstrat', c}{\bullet}\gr{\vpth}
= \liminf\capital{\betstrat, c}{\bullet}\gr{\vpth_{\coi{\stopt\gr{\pth}}{+\infty}}}
\geq f\gr{\pth\oplus_\stopt\vpth_{\coi{\stopt\gr{\pth}}{+\infty}}}
= f\gr{\vpth},
\end{equation*}
where for the last equality we used that \(\vpth=\pth\oplus_\stopt\vpth_{\coi{\stopt\gr{\pth}}{+\infty}}\).
So for any bounded below capital process~\(\capital{\betstrat, c}{\bullet}\in\oscapprocs\) that superhedges~\(f\gr{\pth\oplus_\stopt\bullet}\), there is some bounded below capital process~\(\capital{\betstrat', c}{\bullet}\in\oscapprocs\) with \(\capital{\betstrat'c}{\stopt}\gr{\pth}=c\) that superhedges~\(f\) on~\(\tostop{\stopt}{\pth}\).
From this, we infer that indeed
\begin{equation*}
\oscuprev{f}\gr{\pth}
\leq \osuprev{f\gr{\pth\oplus_\stopt \bullet}}.
\end{equation*}

\ifipmu
The proof of the reverse inequality is similar and therefore omitted.
\else
The proof of the reverse inequality is similar.
Consider any bounded below capital process~\(\capital{\betstrat, c}{\bullet}\in\oscapprocs\) that superhedges~\(f\) on~\(\tostop{\stopt}{\pth}\) with stopping times \(\stopt_1\), \dots, \(\stopt_{n+1}\) and stakes~\(\ustake{1}\), \dots, \(\lstake{n+1}\).
We now consider the trading strategy~\(\betstrat'\) with stopping times given for all \(k\in\set{1, \dots, n+1}\) by
\begin{equation*}
\stopt'_k
\colon \countpaths\to\posextreals
\colon \vpth\mapsto \begin{cases}
\stopt_k\gr{\pth\oplus_\stopt\vpth}-\stopt\gr{\pth} &\text{if } \stopt_k\gr{\pth\oplus_\stopt\vpth}\geq\stopt\gr{\pth} \\
0 &\text{otherwise}
\end{cases}
\end{equation*}
and stakes defined, for all \(k\in\set{1, \dots, n}\), by
\begin{equation*}
\ustake{k}'\gr{\vpth}
\coloneqq \ustake{k}\gr{\pth\oplus_\stopt\vpth}
\quad\text{and}\quad
\lstake{k}'\gr{\vpth}
\coloneqq \lstake{k}\gr{\pth\oplus_\stopt\vpth}
\quad\text{for all } \vpth\in\countpaths.
\end{equation*}
With \(c'\coloneqq\capital{\betstrat, c}{\stopt}\gr{\pth}\), our construction ensures that
\begin{equation*}
\capital{\betstrat', c'}{r}\gr{\vpth}
= \capital{\betstrat, c}{\stopt\gr{\pth}+r}\gr{\pth\oplus_\stopt\vpth}
\quad\text{for all } \vpth\in\countpaths, r\in\posreals,
\end{equation*}
and therefore \(\capital{\betstrat', c'}{\bullet}\in\oscapprocs\) is bounded below with
\begin{equation*}
\liminf \capital{\betstrat', c'}{\bullet}\gr{\vpth}
= \liminf\capital{\betstrat, c}{\bullet}\gr{\pth\oplus_\stopt\vpth}
\geq f\gr{\pth\oplus_\stopt\vpth}
\quad\text{for all } \vpth\in\countpaths.
\end{equation*}
Since \(\tostop{\stopt}{\pth}=\set{\pth\oplus_\stopt\vpth\colon \vpth\in\countpaths}\), we've shown that for any bounded below capital process~\(\capital{\betstrat, c}{\bullet}\in\oscapprocs\) that superhedges~\(f\) on~\(\tostop{\stopt}{\pth}\), there is some bounded below capital process~\(\capital{\betstrat', c'}{\bullet}\in\oscapprocs\) with \(c'=\capital{\betstrat, c}{\stopt}\gr{\pth}\) that superhedges~\(f\gr{\pth\oplus_\stopt\bullet}\).
From this, we infer that indeed
\begin{equation*}
\oscuprev{f}\gr{\pth}
\geq \osuprev{f\gr{\pth\oplus_\stopt \bullet}}.
\end{equation*}
\fi
\qed
\end{proof}

The Markov property---see, for example, \cite[Ch.~4, Eq.~(1.2)]{1986EthierKurtz-Markov}---follows almost immediately from this memoryless character of the conditional upper expectation: for all time points~\(t\in\posreals\) and time periods~\(\Delta\in\posreals\),
\begin{equation*}
\oscuprev{g\gr{N_{t+\Delta}}}[t]\gr{\pth}
= \osuprev{g\gr{\pth\gr{t}+N_\Delta}}
\quad\text{for all } g\in\extreals^{\posints}, \pth\in\countpaths;
\end{equation*}
so does the strong Markov property \cite[Ch.~4, Eq.~(1.17)]{1986EthierKurtz-Markov}: for all stopping times~\(\stopt\in\stopts\) and time periods~\(\Delta\in\posreals\),
\begin{equation*}
\oscuprev{g\gr{N_{\stopt+\Delta}}}\gr{\pth}
= \osuprev{g\gr{\pth\gr{\stopt}+N_\Delta}}
\quad\text{for all } g\in\extreals^{\posints}, \pth\in\set{\stopt<+\infty}.
\end{equation*}


\subsection{The Law of Iterated Upper Expectations}
Next up is the crucial \emph{law of iterated (upper) expectations}, also known as the tower property.
For the sake of simplicity, we'll only establish a version of this law for constant stopping times and bounded and so-called finitary variables: those that depend only on the value of the counting path at finitely many time points.
Obviously, a variable~\(f\in\extvars\) is bounded and finitary if and only if \(f=g\gr{N_{t_1}, \dots, N_{t_k}}\) for some \(k\in\nats\), \(t_1\leq\cdots\leq t_k\in\posreals\) and some \(g\in\bfns[k]\), where here and in the remainder we let \(\bfns[k]\) be the set of bounded real functions on~\(\gr{\posints}^k\)---for \(k=1\), we'll simply write \(\bfns\).
\begin{theorem}\label{the:law of iterated upper expectation}
For all natural numbers~\(k\in\nats\), time points~\(t_1, \dots, t_{k+1}\in\posreals\) such that \(t_1<t_2<\cdots<t_{k+1}\) and gambles~\(g\in\bfns[k+1]\),
\begin{equation*}
\oscuprevstar{g\gr{N_{t_1}, \dots, N_{t_{k+1}}}}[t_1]
= \oscuprevstar{ \oscuprev{g\gr{N_{t_1}, \dots, N_{t_{k+1}}}}[t_k] }[t_1].
\end{equation*}
\end{theorem}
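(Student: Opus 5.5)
The plan is to prove the two inequalities separately. Throughout, write \(f\coloneqq g\gr{N_{t_1}, \dots, N_{t_{k+1}}}\) and \(h\coloneqq\oscuprev{f}[t_k]\). By \zcref{prop:shifted upper expectation}, \(h\gr{\pth}\) depends only on \(\pth\gr{t_1}, \dots, \pth\gr{t_k}\). By \ref{gencuprev:better bounds} in \zcref{prop:gencuprev with coherence}, which applies thanks to \zcref{prop:oscaprocss:co co co}, \(h\) is a bounded, finitary, \(t_k\)-measurable variable with \(\abs{h}\leq B\coloneqq\sup\abs{g}\).

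\emph{The inequality \(\oscuprev{h}[t_1]\leq\oscuprev{f}[t_1]\).} Fix \(\pth\) and a bounded below \(\capital{\betstrat,c}{\bullet}\in\oscapprocs\) that superhedges \(f\) on \(\tostop{t_1}{\pth}\). For every \(\vpth\in\tostop{t_1}{\pth}\), the same process is admissible in the infimum defining \(h\gr{\vpth}\), since \(\tostop{t_k}{\vpth}\subseteq\tostop{t_1}{\pth}\). Hence \(\capital{\betstrat,c}{t_k}\gr{\vpth}\geq h\gr{\vpth}\). Now stop the strategy at \(t_k\) by replacing every \(\stopt_j\) with \(\stopt_j\wedge t_k\). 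This gives a process in \(\oscapprocs\) that is still bounded below, is constant after \(t_k\), and has liminf equal to \(\capital{\betstrat,c}{t_k}\geq h\) on \(\tostop{t_1}{\pth}\). It also has the same value at \(t_1\). Taking the infimum yields the inequality.

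\emph{The inequality \(\oscuprev{f}[t_1]\leq\oscuprev{h}[t_1]\).} Fix \(\pth\) and \(\epsilon>0\). Take a bounded below \(\capital{\betstrat,c}{\bullet}\) that superhedges \(h\) on \(\tostop{t_1}{\pth}\), with value at \(t_1\) at most \(\oscuprev{h}[t_1]\gr{\pth}+\epsilon\). Since \(h\) is \(t_k\)-measurable, coherence~\eqref{eq:coherence condition} at time \(t_k\) gives \(\capital{\betstrat,c}{t_k}\geq h\) on \(\tostop{t_1}{\pth}\), so I again stop this process at \(t_k\).

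Next, for each history \(x=\gr{x_1,\dots,x_k}\) of values at \(t_1,\dots,t_k\), I use \zcref{prop:shifted upper expectation}. It provides a bounded below one-sided elementary strategy \(\betstrat_x\), starting at \(t_k\), with initial capital at most \(h+\epsilon\), that superhedges \(f\) on all paths with that history. I then want to glue: the combined strategy follows \(\betstrat\) up to \(t_k\) and afterwards follows \(\betstrat_x\) for the realised \(x\). Its stakes are \(t_k\)-measurable because they are selected by \(x\).

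The main obstacle is that there are countably many histories. Gluing them would in general produce unbounded stakes, an unbounded number of stopping times, and no uniform lower bound. I would handle this by truncation. Choose \(M\) and keep only the finitely many histories with \(x_k\leq M\); after padding with trivial stopping times, the glued strategy is a genuine elementary strategy. For histories with \(x_k>M\), the glued strategy bets nothing after \(t_k\), so its capital is only at least \(h-\epsilon\geq -B-\epsilon\) there. To cover the missing \(f+B\leq 2B\) on the event \(\set{N_{t_k}>M}\), I add \(\tfrac{2B}{M}\) times the process from the proof of \zcref{prop:expected increment} with \(s=t_1\), \(t=t_k\), and add the same process with \(s=0\), \(t=t_1\) if needed. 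This costs \(2B\ulambda t_k/M\) at time \(t_1\), is non-negative, and ends at least \(2B\) whenever \(N_{t_k}>M\) (more carefully, whenever \(N_{t_k}-N_{t_1}>M-\pth\gr{t_1}\), which is handled the same way).

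The sum is in \(\oscapprocs\) because it is a cone, it is bounded below, and it superhedges \(f\) on \(\tostop{t_1}{\pth}\). Its value at \(t_1\) is at most \(\oscuprev{h}[t_1]\gr{\pth}+2\epsilon+2B\ulambda t_k/M\). Letting \(M\to\infty\) and \(\epsilon\to0\) gives the inequality. Checking the uniform lower bound and the measurability of the glued stakes is the delicate part; finiteness of the retained set of histories is what makes both work.
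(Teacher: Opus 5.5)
Your proposal is correct and follows essentially the same route as the paper. For the first inequality, a superhedge of \(f\) from \(t_1\) already dominates \(\oscuprev{f}[t_k]\) at \(t_k\) and can be stopped there. For the reverse inequality, you glue near-optimal shifted strategies for finitely many histories after \(t_k\) and cover the discarded histories with the Markov-type bound from \zcref{prop:expected increment}, which is the paper's corollary bounding the upper probability of at least \(m\) jumps. The differences are cosmetic: the paper first normalises to \(\inf f\geq 0\) and truncates on the increment \(N_{t_k}-N_{t_1}<m\), whereas you truncate on \(N_{t_k}\leq M\) and cover a deficit of order \(2B\) on the tail.
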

My proof is inspired by Shafer \& Vovk's~\cite[Proposition~8.7]{2001ShaferVovk-Probability} proof for a similar result in discrete time, but adds a move to ensure the `cut' is finite.
\ifipmu
In order to abide the page limit, I've had to make the tough decision of omitting this proof.
\else
Because it's rather lengthy, I've relegated it to \zcref{asec:relegated-proofs}.
\fi

One can turn the law of iterated upper expectations in~\zcref{the:law of iterated upper expectation} into a (theoretical) recursive computational scheme to compute~\(\oscuprev{g\gr{N_{t_1}, \dots, N_{t_k}}}[t_1]\), at least once one realises that due to \zcref{prop:shifted upper expectation}, the variable
\[\vpth\mapsto\oscuprev{g\gr{N_{t_1}, \dots, N_{t_{k+1}}}}[t_k]\gr{\vpth}\]
in the conditional upper expectation on the right-hand side of the equality in \zcref{the:law of iterated upper expectation} is only functionally dependent on the values~\(\vpth\) takes in~\(t_1\), \dots, \(t_k\) rather than on the values it takes on the entire interval~\(\cci{0}{t_k}\)---so only on a countable set rather than an uncountable one.
\begin{corollary}\label{cor:trick with values in conditional}
For all natural numbers~\(k\in\nats\), time points~\(t_1, \dots, t_{k+1}\in\posreals\) such that \(t_1<\cdots<t_{k+1}\), bounded functions~\(g\in\bfns[k+1]\) and paths~\(\pth\in\countpaths\), and with \(\Delta\coloneqq t_{k+1}-t_k\),
\begin{equation*}
\oscuprev{g\gr{N_{t_1}, \dots, N_{t_{k+1}}}}[t_k]\gr{\pth}
= \osuprev{g\gr{\pth\gr{t_1}, \dots, \pth\gr{t_k}, \pth\gr{t_k}+N_{\Delta}}}.
\end{equation*}
\end{corollary}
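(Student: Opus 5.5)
This is a direct application of \zcref{prop:shifted upper expectation} with the constant stopping time \(\stopt\coloneqq t_k\). The plan is to compute the shifted variable explicitly and check that it coincides with the variable on the right-hand side of the statement.

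Fix \(\pth\in\countpaths\) and write \(f\coloneqq g\gr{N_{t_1}, \dots, N_{t_{k+1}}}\). This is a bounded real variable on \(\countpaths\), so \(f\in\stoptdom[t_k]\), and \(\set{t_k<+\infty}=\countpaths\). \zcref{prop:shifted upper expectation} then gives
\begin{equation*}
\oscuprev{f}[t_k]\gr{\pth}
= \osuprev{f\gr{\pth\oplus_{t_k}\bullet}}.
\end{equation*}
It therefore suffices to show that \(f\gr{\pth\oplus_{t_k}\vpth}=g\gr{\pth\gr{t_1}, \dots, \pth\gr{t_k}, \pth\gr{t_k}+\vpth\gr{\Delta}}\) for every \(\vpth\in\countpaths\). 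The two variables are then identical, and so are their upper expectations.

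We evaluate the stitched path at each \(t_j\) using its definition with \(\stopt\gr{\pth}=t_k\).
\begin{itemize}
\item For \(j<k\) we have \(t_j<t_k\), so \(\gr{\pth\oplus_{t_k}\vpth}\gr{t_j}=\pth\gr{t_j}\).
\item For \(j=k\) we fall in the second case, so \(\gr{\pth\oplus_{t_k}\vpth}\gr{t_k}=\pth\gr{t_k}+\vpth\gr{0}=\pth\gr{t_k}\), because counting paths start in \(0\).
\item For \(j=k+1\) we get \(\gr{\pth\oplus_{t_k}\vpth}\gr{t_{k+1}}=\pth\gr{t_k}+\vpth\gr{t_{k+1}-t_k}=\pth\gr{t_k}+\vpth\gr{\Delta}\).
\end{itemize}
Substituting these values into \(g\) gives the claimed identity, and hence the corollary.

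I expect no real obstacle. The only points needing care are the boundary case \(j=k\), which depends on the convention \(\vpth\gr{0}=0\), and the bookkeeping that the strict inequalities \(t_1<\cdots<t_{k+1}\) place each \(t_j\) in the correct branch of the definition of \(\pth\oplus_{t_k}\vpth\).
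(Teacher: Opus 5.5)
Your proposal is correct and matches the paper's proof exactly. The paper likewise applies the shifted-upper-expectation proposition at the constant stopping time \(t_k\) and notes that \(\bigl[g(N_{t_1}, \dots, N_{t_{k+1}})\bigr](\pth\oplus_{t_k}\vpth) = g(\pth(t_1), \dots, \pth(t_k), \pth(t_k)+\vpth(t_{k+1}-t_k))\); you have spelled out the same computation point by point.
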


\subsection{Connection to the Sublinear Poisson Semigroup}
\zcref[S]{cor:trick with values in conditional} tells us that it's important to be able to compute conditional upper expectations of the form~\(\oscuprev{g\gr{N_t}}[s]\) for bounded functions~\(g\in\bfns\) and time points~\(s,t\in\posreals\) such that \(s<t\).
For the classical Poisson process with rate~\(\lambda\), and for Vovk's measure-theoretic Poisson process \cite[Theorem~5]{1993Vovk}, these conditional expectations are related to the Poisson distribution~\(\psi_{\lambda\gr{t-s}}\) with parameter~\(\lambda\gr{t-s}\):
\begin{equation*}
\tscuprev{g\gr{N_t}}[s]\gr{\pth}
= \tsclprev{g\gr{N_t}}[s]\gr{\pth}
= \sum_{z\in\posints} g\gr{\pth\gr{s}+z} \psi_{\lambda\gr{t-s}}\gr{z}.
\end{equation*}
To generalise this property to our imprecise setting, it will be elucidative to express the right-hand side of this equality using the so-called \emph{Poisson semigroup}~\(\gr{\poisssg_\Delta}_{\Delta\in\posreals}\) for~\(\Delta=t-s\):
\begin{equation*}
\tscuprev{g\gr{N_t}}[s]\gr{\pth}
= \tsclprev{g\gr{N_t}}[s]\gr{\pth}
= \br{\poisssg_{\gr{t-s}} g}\gr{\pth\gr{s}}.
\end{equation*}
This family~\(\poisssg_\bullet\) of linear operators---linear maps from \(\bfns\) to \(\bfns\)---is generated by the Poisson generator~\(\poissgen\colon\bfns\to\bfns\), which maps any \(g\in\bfns\) to
\begin{equation*}
\poissgen g
\colon \posints \to \reals
\colon n\mapsto \lambda\gr{g\gr{n+1}-g\gr{n}},
\end{equation*}
as follows:
\begin{equation}\label{eq:poisssg as exponential}
\poisssg_\Delta
= e^{\Delta\poissgen}
= \lim_{k\to+\infty} \gr*{\eye+\frac{\Delta}{k}\poissgen}^k
\quad\text{for all } \Delta\in\posreals.
\end{equation}

Crucially, a similar result holds for the imprecise game-theoretic Poisson process, at least if we replace the Poisson semigroup~\(\poisssg_\bullet\) by the sublinear Poisson semigroup~\(\upoisssg_\bullet\).
As explained in \cite{2025Erreygers}, this family of sublinear operators is generated by the sublinear Poisson generator~\(\upoissgen\colon\bfns\to\bfns\), which maps any \(g\in\bfns\) to
\begin{equation*}
\upoissgen g
\colon \posints\to\reals
\colon n\mapsto \max\set{\lambda\gr[\big]{g\gr{n+1}-g\gr{n}}\colon \lambda\in\set{\llambda,\ulambda}},
\end{equation*}
through the operator exponential:
\begin{equation}\label{eq:upoisssg as exponential}
    \upoisssg_\Delta
    \coloneqq e^{\Delta\upoissgen}
    = \lim_{k\to+\infty} \gr*{\eye+\frac{\Delta}{k}\upoissgen}^k
    \quad\text{for all } \Delta\in\posreals.
\end{equation}
\begin{theorem}\label{the:with sublinear poisson semigroup}
For all time points~\(s,t\in\posreals\) such that \(s\leq t\) and \(g\in\bfns\),
\begin{equation*}
\oscuprev{g\gr{N_t}}[s]
= \br{\upoisssg_{t-s} g}\gr{N_s}.
\end{equation*}
\end{theorem}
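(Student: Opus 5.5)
First reduce to \(s=0\). By \zcref{prop:shifted upper expectation} with the constant stopping time \(\stopt=s\), for every \(\pth\in\countpaths\) we get \(\oscuprev{g\gr{N_t}}[s]\gr{\pth}=\osuprev{g\gr{\pth\gr{s}+N_{t-s}}}\). It therefore suffices to show that \(\osuprev{g\gr{n+N_\Delta}}=\br{\upoisssg_\Delta g}\gr{n}\) for all \(n\in\posints\), \(\Delta\in\posreals\) and \(g\in\bfns\). Replacing \(g\) by its shift \(g\gr{n+\bullet}\) reduces this further to \(n=0\). Write \(\phi_\Delta\gr{g}\coloneqq\osuprev{g\gr{N_\Delta}}\) and, more generally, \(\Phi_\Delta g\colon n\mapsto\osuprev{g\gr{n+N_\Delta}}\). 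The goal is \(\Phi_\Delta=\upoisssg_\Delta\).

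\textbf{Step 1 (semigroup structure).} Take \(k=1\) in \zcref{the:law of iterated upper expectation} with time points \(t_1=0<t_2=r<t_3=r+\Delta'\), possibly after padding \(g\) with a dummy first argument. Combined with \zcref{cor:trick with values in conditional}, this gives \(\Phi_{r+\Delta'}=\Phi_r\Phi_{\Delta'}\). Since \(N_0=0\), the case with first time point \(0\) is handled through the unconditional operator.

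\textbf{Step 2 (one-step comparison with \(\eye+h\upoissgen\)).} For small \(h\) I aim for \(\norm\) bounds of the form \(\abs{\Phi_h g-\gr{g+h\upoissgen g}}\leq C h^2\norm{g}\), uniformly in \(n\). This is done in two halves.

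For the upper bound, I construct a one-sided elementary strategy on \(\cci{0}{h}\). Start with capital \(g\gr{0}+h\max\set{\lambda\gr{g\gr{1}-g\gr{0}}\colon\lambda\in\set{\llambda,\ulambda}}+Ch^2\). If \(g\gr{1}\geq g\gr{0}\), take stake \(\ustake{}=g\gr{1}-g\gr{0}\) on the \(\ulambda\)-bet until the first jump or time \(h\). Otherwise take \(\lstake{}=g\gr{0}-g\gr{1}\) on the \(\llambda\)-bet. After the first jump, I superhedge the residual \(g\gr{1+N_h-N_\rho}-g\gr{1}\) using the crude bound from the probability of a second jump. That bound is obtained as in \zcref{prop:expected increment}: \(\osuprev{\indica{N_h\geq2}}\leq\ulambda^2h^2\), via an \(\ulambda\)-bet started at the first jump.

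For the lower bound, I use coherence, i.e.\ E6 and E7 from \zcref{prop:gencuprev with coherence}, together with \zcref{prop:monotonicity in lambda}. The precise processes \(\tscapprocs[\ulambda]\) and \(\tscapprocs[\llambda]\) are contained in \(\oscapprocs\), so \(\Phi_h g\geq\max_{\lambda}\poisssg^{\lambda}_h g\), using the classical Poisson semigroup value that the paper cites from Vovk. In turn, \(\max_{\lambda}\poisssg^{\lambda}_h g\geq g+h\upoissgen g-Ch^2\norm{g}\).

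\textbf{Step 3 (pass to the limit).} Iterate Step 2 along \(h=\Delta/k\) using the semigroup property from Step 1. Both \(\Phi\) and \(\eye+h\upoissgen\) are monotone, constant-additive and nonexpansive in the sup norm: \(\Phi\) by E1–E5, and \(\eye+h\upoissgen\) for \(h\ulambda\leq1\). Hence the errors add up to at most \(k\cdot C\gr{\Delta/k}^2\norm{g}\to0\). Together with the limit formula \eqref{eq:upoisssg as exponential}, this gives \(\Phi_\Delta=\upoisssg_\Delta\).

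\textbf{Main obstacle.} I expect the upper bound in Step 2 to be the main obstacle. It requires an explicit strategy that stops at the first jump and then controls the error from multiple jumps uniformly, while keeping the capital process bounded below.
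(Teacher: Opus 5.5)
Your reduction to \(s=0\) and your use of the tower law to get \(\Phi_{r+\Delta'}=\Phi_r\Phi_{\Delta'}\) match the paper. After that, your route is genuinely different.

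\textbf{Comparison with the paper.} The paper gets \(\Phi_\Delta\leq\upoisssg_\Delta\) from the global Bernoulli-approximation strategy of \zcref{prop:sublinear not greater than watanabe}. It then shows \(\poisssg^\lambda_\Delta\leq\Phi_\Delta\) for every \(\lambda\in\cci{\llambda}{\ulambda}\), and closes the gap with an external result: \(\upoisssg_\bullet\) is Nendel's Nisio semigroup, the pointwise smallest semigroup dominating all \(\poisssg^\lambda_\bullet\). You replace this with a local estimate \(\Phi_h g=g+h\upoissgen g+O(h^2)\) and an Euler-type telescoping along \(h=\Delta/k\). This in effect reproves the needed minimality in this concrete case and makes the argument self-contained. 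The telescoping only uses monotonicity and constant additivity, so the two one-sided bounds can be iterated separately. In particular, you could take the upper bound directly from \zcref{prop:sublinear not greater than watanabe} and run your iteration only for the lower bound, which is exactly where the paper relies on Nendel.

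\textbf{The upper-bound strategy fails when \(g(1)<g(0)\).} Set \(d\coloneqq g(0)-g(1)>0\), take initial capital \(g(0)-h\llambda d+Ch^2\), and stop the \(\llambda\)-bet of stake \(d\) at the first jump \(\rho\leq h\). The capital at \(\rho\) is then \(g(1)-\llambda d(h-\rho)+Ch^2\). The compensator income stops at \(\rho\), leaving a shortfall of order \(h\) on the jump event. Neither the \(Ch^2\) cushion nor the second-jump hedge covers it: take \(\llambda>0\), \(g=\indica{\set{0}}\), \(\rho\) close to \(0\) and no further jump. In the \(\ulambda\)-branch the unspent compensator is a surplus instead, which is why that branch works. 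To fix it, keep the \(\llambda\)-bet open until time \(h\) or the second jump, whichever comes first. The capital at \(h\) is then \(g(1)+Ch^2\) on \(\set{N_h=1}\) and at least \(g(0)-2d-\llambda dh\) on \(\set{N_h\geq2}\). A multiple of the two-jump hedge then absorbs the rest.

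\textbf{Smaller points.}
\begin{itemize}
\item The bound \(\ulambda^2h^2\) on the upper probability of two jumps in \(\cci{0}{h}\) does not come from an \(\ulambda\)-bet started at the first jump alone, which costs up to \(\ulambda h\). You have to prefund it, for instance with an \(\ulambda\)-bet of stake \(\ulambda h\) held from \(0\) until the first jump, starting from capital \(\ulambda^2h^2\).
\item The inclusion you invoke for the lower bound is backwards. \(\tscapprocs[\ulambda]\) and \(\tscapprocs[\llambda]\) are not subsets of \(\oscapprocs\). Instead, for each \(\lambda\in\cci{\llambda}{\ulambda}\), every one-sided capital process is pointwise dominated by the two-sided one with the same stopping times, the same initial capital and stakes \(\diffstake{k}\). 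Your inclusion would give \(\Phi_h\leq\poisssg^\lambda_h\), the wrong direction. The inequality \(\Phi_h g\geq\max_\lambda\poisssg^\lambda_h g\) you actually need is correct; it follows from \zcref{prop:monotonicity in lambda} together with the identification you cite from Vovk.
\item Using three time points in \zcref{the:law of iterated upper expectation} means \(k=2\), not \(k=1\).
\end{itemize}
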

As an intermediary step towards proving this result, we establish the following `generalisation'\footnote{We only generalise their result in the particular case \(A_t=\lambda t\).} of Vovk's~\cite{1993Vovk} Theorem~5; the proof is a rather straightforward modification of the original one, essentially replacing the `Bernouilli approximation' of~\zcref{eq:poisssg as exponential} with that of~\zcref{eq:upoisssg as exponential} and replacing the derived two-sided trading strategy with the natural one-sided counterpart.
\begin{proposition}\label{prop:sublinear not greater than watanabe}
For all \(s,t\in\posreals\) such that \(s\leq t\) and \(g\in\bfns\),
\begin{equation*}
\oscuprev{g\gr{N_t}}[s]
\leq \br[\big]{\upoisssg_{t-s}g}\gr{N_s}.
\end{equation*}
\end{proposition}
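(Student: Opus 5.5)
By \zcref{prop:shifted upper expectation} with the constant stopping time \(s\), it suffices to show that \(\osuprev{g\gr{m+N_\Delta}}\leq\br{\upoisssg_\Delta g}\gr{m}\) for all \(m\in\posints\), where \(\Delta\coloneqq t-s\); indeed \(g\gr{N_t}\gr{\pth\oplus_s\vpth}=g\gr{\pth\gr{s}+\vpth\gr{\Delta}}\). Replacing \(g\) by \(g\gr{m+\bullet}\) and using that \(\upoissgen\), and hence \(\upoisssg_\Delta\), commutes with shifts, we may take \(m=0\) and \(s=0\). Fix \(\epsilon>0\). The plan is to build, for a large \(k\in\nats\), a one-sided elementary capital process that starts with capital close to \(\br{\gr{\eye+\frac{\Delta}{k}\upoissgen}^k g}\gr{0}\), is bounded below, and ends at time~\(\Delta\) with capital at least \(g\gr{N_\Delta}-\epsilon\). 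After time~\(\Delta\) it stays constant, so its \(\liminf\) equals its value at~\(\Delta\). Letting \(k\to+\infty\) and invoking~\eqref{eq:upoisssg as exponential} together with \ref{gencuprev:constant additivity} and \ref{gencuprev:monotonicity} then gives the claim.

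\textbf{Construction.} Set \(\delta\coloneqq\Delta/k\), \(g_k\coloneqq g\), and \(g_{j}\coloneqq\gr{\eye+\delta\upoissgen}g_{j+1}\) for \(j=k-1,\dots,0\), so that \(g_0=\gr{\eye+\delta\upoissgen}^kg\). All \(g_j\) are bounded by \(\sup\abs{g}\) once \(\delta\ulambda\leq1\), since then \(\eye+\delta\upoissgen\) is monotone and constant-preserving. On the interval \(\cci{j\delta}{(j+1)\delta}\), with \(n\coloneqq N_{j\delta}\) and \(d\coloneqq g_{j+1}\gr{n+1}-g_{j+1}\gr{n}\), trader bets with stake \(\ustake{}=d\) if \(d\geq0\), and with \(\lstake{}=-d\) if \(d<0\). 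The stake is \(j\delta\)-measurable and bounded. By~\eqref{eqn:one-sided capital process different increment}, the capital gain over the interval is \(d\gr{X-\lambda\delta}\), where \(X\coloneqq N_{(j+1)\delta}-N_{j\delta}\) and \(\lambda\in\set{\llambda,\ulambda}\) is the maximiser appearing in \(\upoissgen g_{j+1}\gr{n}\). The aim is the local inequality
\begin{equation*}
g_j\gr{n}+d\gr{X-\lambda\delta}\geq g_{j+1}\gr{n+X}-\text{error},
\end{equation*}
which holds with equality when \(X\in\set{0,1}\).

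\textbf{Main obstacle.} The difficulty is controlling what happens when \(X\geq2\), and keeping the whole process bounded below, since the linear bet pays \(dX\), which is unbounded. I would handle this as in the proof of \zcref{prop:expected increment}: stop each bet early at the second jump inside the interval. More precisely, I would use stopping times \(\min\set{(j+1)\delta,\text{time of the second jump after }j\delta}\) and, in the event of a second jump, continue with a new sub-bet from the current state. Alternatively, and more simply, I would add a second auxiliary bet with positive stake \(C\delta\) on \(N_{(j+1)\delta}-N_{j\delta}-\ulambda\delta\), for a constant \(C\) depending on \(\sup\abs{g}\). This bet costs only about \(C\ulambda\delta^2\) per step in expectation, and pays at least \(C\delta\gr{X-1}\) when \(X\geq2\), which should dominate the loss \(2\sup\abs{g}\). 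Scaling the stake by \(\delta\) works because the upper expectation of the event \(X\geq2\) is \(O(\delta^2)\), which is exactly the Bernoulli-approximation error. The total extra cost is then \(k\cdot O(\delta^2)=O(\Delta^2/k)\to0\).

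Vovk's original argument controls this with a cruder bound on \(\sup\abs{g}\), and I expect the bookkeeping needed to verify boundedness below and the summed error to be the technical core. Concatenating the \(k\) bets gives a single elementary strategy, since sums of such processes stay in \(\oscapprocs\) by \zcref{prop:oscaprocss:co co co}. This yields \(\osuprev{g\gr{N_\Delta}}\leq g_0\gr{0}+O(1/k)\), and letting \(k\to+\infty\) finishes the proof.
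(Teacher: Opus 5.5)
Your reduction to \(s=0\) and \(\pth\gr{s}=0\) via \zcref{prop:shifted upper expectation} is fine. So are your stakes, which are exactly the paper's \(\ustake{k}\) and \(\lstake{k}\), and the exact identity for \(X\in\set{0,1}\). The proof breaks where you yourself locate the difficulty, at \(X\geq2\).

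On \(\set{X\geq2}\) the Bernoulli bet leaves you with \(g_{j+1}\gr{n}+dX\) instead of \(g_{j+1}\gr{n+X}\). For \(X=2\) the shortfall is the second difference \(g_{j+1}\gr{n+2}-2g_{j+1}\gr{n+1}+g_{j+1}\gr{n}\). This can be of order \(\sup\abs{g}\) and does not shrink with \(\delta\).

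\begin{itemize}
\item Your auxiliary bet with stake \(C\delta\) on \(N_{(j+1)\delta}-N_{j\delta}-\ulambda\delta\) pays only about \(2C\delta\) there. For fixed \(C\) it cannot dominate this shortfall.
\item No bet that is linear in \(X\) over the whole interval can do so either. To pay \(L\) on \(\set{X=2}\) it needs a stake of about \(L/2\), and then it loses about \(L\ulambda\delta/2\) on every interval without a jump. Along the path that never jumps, these losses add up to order \(L\ulambda\Delta\) rather than \(O\gr{1/k}\).
\item Your count ``\(k\cdot O\gr{\delta^2}\) in expectation'' is a probabilistic heuristic, not a pathwise superhedge.
\item The auxiliary bet does not restore boundedness below: if \(d<0\), the net slope in \(X\) is \(d+C\delta<0\) for small \(\delta\).
\item Your first alternative (stop at the second jump and start a new sub-bet) does give boundedness below. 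But at the second jump the capital is still short by that second difference, and nothing in your plan pays for it.
\end{itemize}

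What is missing is a cheap superhedge of the event ``two or more jumps in some grid interval''. This requires a bet that is switched on by a stopping time. Take a stake of order one on the compensated increment from the first jump inside an interval up to that interval's right endpoint. It pays about \(X-1\geq1\) on \(\set{X\geq2}\), and it loses at most \(\ulambda\delta\) per unit stake, and only on intervals in which a jump occurs.

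This is how the paper proceeds. It stops the Bernoulli strategy at the first time \(\sigma_n\) at which some interval contains a second jump; there the capital is still at least \(2\inf g-\sup g\). It then adds \(2\gr{\sup g-\inf g}\) times a superhedge of \(\indica{\compl{A_n}}\), whose price tends to zero by \zcref{lem:more than one jump in small interval}. That lemma uses exactly such jump-triggered bets. It keeps their total cost below \(m\ulambda\Delta_n\) by stopping after \(m\) jumps, and it hedges \(\set{N_t-N_s\geq m}\) with \zcref{cor:bound on number of jumps}.

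Your \(C\delta\)-stake bet is in fact the natural companion of the jump-triggered one. A stake \(L\ulambda\delta\) on the whole interval, plus a stake \(L\) from the first jump onwards, turns initial capital \(L\ulambda^2\delta^2\) into at least \(L\gr{\gr{X-1}\vee0}\) at the interval's end. That would give you a per-interval repair costing \(O\gr{1/k}\) in total. But the work is done by the jump-triggered bet, which is absent from your plan.
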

Proving \zcref{the:with sublinear poisson semigroup} is now a matter of combining \zcref{prop:sublinear not greater than watanabe} with a couple of other previously obtained results.
\begin{proof}[Proof of \zcref{the:with sublinear poisson semigroup}]
Recall from \zcref{prop:shifted upper expectation} that
\begin{equation*}
\oscuprev{g\gr{N_t}}[s]\gr{\pth}
= \osuprev{g\gr{\pth\gr{s}+N_{t-s}}}
\quad\text{for all } \pth\in\countpaths.
\end{equation*}
Consequently, it suffices to show that
\begin{equation*}
\uprev\br{g\gr{z+N_\Delta}}
= \br{\upoisssg_\Delta g}\gr{z}
\quad\text{for all } g\in\bfns, z\in\posints, \Delta\in\posreals.
\end{equation*}
So for all \(\Delta\in\posreals\), let \(\utranop_\Delta\colon\bfns\to\bfns\) map \(g\in\bfns\) to
\begin{equation*}
\utranop_\Delta g
\colon \posints\to\reals
\colon z\mapsto \uprev\br{g\gr{z+N_\Delta}}.
\end{equation*}
It follows immediately from the properties of the upper expectation that \(\utranop_\Delta\) is a sublinear transition operator, from \zcref{prop:sublinear not greater than watanabe} and \cite[Lemma~53]{2019Erreygers} that \(\utranop_\Delta\) is dominated by~\(\upoisssg_\Delta\), and from \ref{gencuprev:better bounds} that \(\utranop_0=\eye\).

Furthermore, we can use the law of iterated upper expectations [\zcref{the:law of iterated upper expectation}] to show that \(\utranop_\bullet\) is a semigroup.
To this end, fix some \(\Delta_1, \Delta_2\in\sposreals\), \(g\in\bfns\) and \(z\in\posints\), and observe that
\begin{equation*}
\br{\utranop_{\Delta_1+\Delta_2} g}\gr{z}
= \osuprev{g\gr{z+N_{\Delta_1+\Delta_2}}}
= \osuprev{\oscuprev{g\gr{z+N_{\Delta_1+\Delta_2}}}[\Delta_1]}.
\end{equation*}
Now for all \(\pth\in\countpaths\), it follows from \zcref{prop:shifted upper expectation} that
\begin{equation*}
\oscuprev{g\gr{z+N_{\Delta_1+\Delta_2}}}[\Delta_1]\gr{\pth}
= \osuprev{g\gr{z+\pth\gr{\Delta_1}+N_{\Delta_2}}}
= \br{\utranop_{\Delta_2} g}\gr{z+\pth\gr{\Delta_1}}.
\end{equation*}
Substituting into the preceding equality, we find that
\begin{align*}
\br{\utranop_{\Delta_1+\Delta_2} g}\gr{z}
= \uprev\br[\big]{\br{\utranop_{\Delta_2} g}\gr{z+N_{\Delta_1}}}
= \br{\utranop_{\Delta_1} \utranop_{\Delta_2} g}\gr{z},
\end{align*}
as required.

For any \(\lambda\in\cci{\llambda}{\ulambda}\) and \(\Delta\in\posreals\), let \(\tranop^\lambda_\Delta\) map~\(g\in\bfns\) to
\begin{equation*}
\tranop^\lambda_\Delta g
\colon \posints\to\reals
\colon z\mapsto \tsuprev{g\gr{z+N_\Delta}}.
\end{equation*}
Now from \zcref{prop:sublinear not greater than watanabe}, it follows that for all \(\Delta\in\posreals\) and \(g\in\bfns\),
\begin{equation*}
\tranop^\lambda_\Delta g
\leq \poisssg^\lambda_\Delta g
\quad\text{and}\quad
-\tranop^\lambda_\Delta \gr{-g}
\geq -\poisssg^\lambda_\Delta \gr{-g}
= \poisssg^\lambda_\Delta g,
\end{equation*}
where \(\poisssg^\lambda_\bullet\) is the Poisson semigroup with rate~\(\lambda\).
Consequently, \(\tranop^\lambda_\Delta=\poisssg^\lambda_\Delta\) for all \(\Delta\in\posreals\) and \(\lambda\in\cci{\llambda}{\ulambda}\).
From this, \zcref{prop:monotonicity in lambda,prop:sublinear not greater than watanabe}, it follows that
\begin{equation*}
\poisssg^\lambda_\Delta g
= \tranop^\lambda_\Delta g
\leq \utranop_\Delta g
\leq \upoisssg_\Delta g
\quad\text{for all } \lambda\in\cci{\llambda}{\ulambda}, \Delta\in\posreals, g\in\bfns.
\end{equation*}
Since \(\upoisssg_\bullet\) is equal to Nendel's~\cite{2021Nendel} so called \emph{Nisio semigroup} induced by the family \(\set{\poisssg^\lambda_\bullet\colon \lambda\in\cci{\llambda}{\ulambda}}\) \cite[Proposition~5.2]{2025Erreygers}, and this Nisio semigroup is the point-wise smallest semigroup that dominates the family \(\set{\poisssg^\lambda_\bullet\colon \lambda\in\cci{\llambda}{\ulambda}}\) \cite[Remark~5.3]{2021Nendel}, it follows from these inequalities that \(\utranop_\Delta=\upoisssg_\Delta\) for all \(\Delta\in\posreals\), which concludes our proof.
\qed
\end{proof}

\section{Conclusion}\label{sec:conclusion}

I am by no means the first to construct an `imprecise' Poisson process.
To the best of my knowledge, Hu and Peng~\cite[Section~6]{2021Hu} were the first to construct a sublinear expectation for \(\reals^d\)-valued Lévy processes, through the viscosity solution to an integro-partial differential equation characterised by a set of Lévy triples; see their Example~43 for the special case of the Poisson process.
This inspired Neufeld and Nutz \cite{2017Neufeld} to construct an upper expectation for \(\reals^d\)-valued Lévy processes through the upper envelope of the expectations with respect to the set of probability measures whose `differential characteristics' take values in the set of triplets; their Example~2.6 treats the Poisson process as a special case.

Unaware of this work, and inspired by work on imprecise finite-state Markov processes \cite{2017Krak}, Jasper De Bock and I~\cite{2019Erreygers} constructed a sublinear expectation for the Poisson process as the upper envelope of the expectations with respect to the set of `compatible' counting processes---a much less technical condition than the one in \cite{2017Neufeld}.
Inspired by \cite{2018Denk}, I then took an arguably more fruitful approach to the problem in~\cite[Section~6]{2024Erreygers}, constructing a sublinear expectation directly from the sublinear Poisson semigroup.

These earlier approaches have in common with the approach in the present contribution that the upper expectation of~\(g\gr{N_t}\) can be retrieved as the (viscosity) solution to what is essentially the same differential equation; compare the differential equations in~\cite[Eq.~(9)]{2021Hu} and \cite[Eq.~(2.5)]{2017Neufeld} to the one in~\cite[Proposition~3.6]{2025Erreygers}.
Except for~\cite{2024Erreygers}, these earlier approaches suffer from the same limitation: they only (practically) determine the upper expectation of finitary variables.

Under the game-theoretic approach, the upper expectation of such (bounded) finitary variables can be determined by means of \zcref{the:law of iterated upper expectation,cor:trick with values in conditional,the:with sublinear poisson semigroup}.
However, in \zcref{prop:expected renewal time} I've also explicitly determined the upper and lower expectation of a non-finitary variable---the time~\(\rho_\stopt-\stopt\) until the next increment.
Another benefit of the game-theoretic approach over the other approaches is that it involves fewer technicalities---there's no need for measurability, for example, and conditional expectations are defined uniquely and naturally.

Whether the (conditional) upper expectation of other non-finitary variables can also be determined easily is one of the many possible lines of follow-up research I envision.
A related future research topic is the continuity of the game-theoretic upper expectation with respect to point-wise convergence of variables.
More generally, one could see this work as a first step in the study of imprecise renewal processes, or even imprecise countable-state Markov processes, in the game-theoretic framework.

\begin{credits}
\subsubsection{\ackname}
My work on this contribution was funded by Ghent University's Special Research Fund.
I'd like to express my gratitude to the three anonymous reviewers; their constructive comments have significantly improved this contribution.
I dedicate this contribution to Milena and Dolores, in part for their putting up with my mental and/or physical absence during the preparation of this contribution, but mostly for their love.

\subsubsection{\discintname} Nothing to disclose.
\end{credits}

\bibliographystyle{splncs04}
\nocite{2007Williams} 
\bibliography{game-theoretic-poisson}
\ifipmu\else
\newpage
\appendix
\section{Relegated proofs}\label{asec:relegated-proofs}

\subsection{Additional results and relegated proofs for results in \zcref{sec:game-theoretic upper expectations}}
\begin{relproof}{\zcref{prop:gencuprev}}
Note that since \(\gencapprocs\) contains all constant processes and is a cone, the same is true for the set
\begin{equation*}
\set{\gencapital_\bullet\in\gencapprocs\colon \inf\gencapital_\bullet>-\infty},
\end{equation*}
which is non-empty because it contains at least the constant processes.
The properties follow almost immediately from this observation and the definition of~\(\gencuprev{\bullet}[\bullet]\).
\end{relproof}

One consequence of coherence is that in the definition of the conditional upper expectation~\(\gencuprev{\bullet}\gr{\pth}\), we can limit ourselves to capital processes that are always greater than the infimum of~\(f\) on~\(\tostop{\stopt}{\pth}\).
\begin{lemma}\label{lem:capital above remaining infimum}
Suppose \(\gencapprocs\) is coherent.
Then for all \(\stopt\in\stopts\), \(f\in\stoptdom\), \(\pth\in\set{\stopt<+\infty}\) and \(\gencapital_\bullet\in\gencapprocs\) such that \(\liminf\gencapital_\bullet\geq_{\tostop{\stopt}{\pth}} f\),
\begin{equation*}
\gencapital_t\vert_{\tostop{\stopt}{\pth}}
\geq \inf f\vert_{\tostop{\stopt}{\pth}}
\quad\text{for all } t\in\coi{\stopt\gr{\pth}}{+\infty}.
\end{equation*}
\end{lemma}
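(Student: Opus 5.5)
The plan is to apply the coherence condition~\eqref{eq:coherence condition} directly, at the time point~\(t\) and at each path in~\(\tostop{\stopt}{\pth}\). Fix \(t\in\coi{\stopt\gr{\pth}}{+\infty}\) and \(\vpth\in\tostop{\stopt}{\pth}\); the goal is to show \(\gencapital_t\gr{\vpth}\geq\inf f\vert_{\tostop{\stopt}{\pth}}\).

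Coherence applied to \(\gencapital_\bullet\), the time point~\(t\) and the path~\(\vpth\) gives
\begin{equation*}
\gencapital_t\gr{\vpth}
\geq \inf\set*{\liminf\gencapital_\bullet\gr{\vpth'}\colon \vpth'\in\tostop{t}{\vpth}}.
\end{equation*}
The key step is the inclusion \(\tostop{t}{\vpth}\subseteq\tostop{\stopt}{\pth}\). To see it, let \(\vpth'\in\tostop{t}{\vpth}\), so that \(\vpth'\) agrees with \(\vpth\) on~\(\cci{0}{t}\). Since \(\vpth\in\tostop{\stopt}{\pth}\), the path \(\vpth\) agrees with \(\pth\) on~\(\cci{0}{\stopt\gr{\pth}}\). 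As \(t\geq\stopt\gr{\pth}\), we have \(\cci{0}{\stopt\gr{\pth}}\subseteq\cci{0}{t}\), so \(\vpth'\) agrees with \(\pth\) on~\(\cci{0}{\stopt\gr{\pth}}\), which means \(\vpth'\in\tostop{\stopt}{\pth}\).

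I don't even need \(\stopt\gr{\vpth}=\stopt\gr{\pth}\), which would follow from the stopping time property. The sets \(\tostop{\stopt}{\cdot}\) are defined through \(\stopt\gr{\pth}\) only, so the inclusion is purely set-theoretic.

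By the superhedging assumption \(\liminf\gencapital_\bullet\geq_{\tostop{\stopt}{\pth}} f\), each \(\vpth'\in\tostop{t}{\vpth}\) satisfies
\begin{equation*}
\liminf\gencapital_\bullet\gr{\vpth'}\geq f\gr{\vpth'}\geq\inf f\vert_{\tostop{\stopt}{\pth}}.
\end{equation*}
Taking the infimum over \(\vpth'\in\tostop{t}{\vpth}\) and combining with the coherence inequality yields \(\gencapital_t\gr{\vpth}\geq\inf f\vert_{\tostop{\stopt}{\pth}}\), as required.

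The only point needing care is the extended-real arithmetic: \(f\) may take infinite values, and \(\liminf\) may be \(\pm\infty\). However, only inequalities and infima over nonempty sets are used, and \(\tostop{t}{\vpth}\) contains \(\vpth\), so no ill-defined operations arise. I expect no genuine obstacle beyond checking the inclusion of the sets of compatible paths.
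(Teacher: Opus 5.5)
Your proof is correct and is essentially the paper's argument: the paper only says that the lemma follows almost immediately from the definition of coherence, and you supply the details. Applying~\eqref{eq:coherence condition} at \(t\) and \(\vpth\), together with the inclusion \(\tostop{t}{\vpth}\subseteq\tostop{\stopt}{\pth}\) for \(t\geq\stopt\gr{\pth}\), is exactly the omitted reasoning.
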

\begin{proof}
Follows almost immediately from the definition of coherence.
\qed\end{proof}

\zcref[S]{lem:capital above remaining infimum} comes in handy in our proof for \zcref{prop:gencuprev with coherence}, more specifically when proving \ref{gencuprev:better bounds}.
\begin{relproof}{\zcref{prop:gencuprev with coherence}}
For \ref{gencuprev:better bounds}, the upper found follows almost immediately from the definition of \(\gencuprev{\bullet}\gr{\pth}\) and the assumption in the statement that \(\gencapprocs\) contains all constants, while the lower bound follows from \zcref{lem:capital above remaining infimum} for \(t=\stopt\gr{\pth}\).

For \ref{gencuprev:lower upper}, we need to prove that
\begin{equation}\label{eqn:proof of gencuprev with coherence:to prove}
\genclprev{f}\gr{\pth}
= -\gencuprev{-f}\gr{\pth}
\leq \gencuprev{f}\gr{\pth}.
\end{equation}
This inequality holds trivially if \(\gencuprev{f}\gr{\pth}+\infty\) or \(\gencuprev{-f}\gr{\pth}=+\infty\), so we assume that \(\gencuprev{f}\gr{\pth}<+\infty\) and \(\gencuprev{-f}\gr{\pth}<+\infty\).
Since \(f-f\geq 0\), it follows from \ref{gencuprev:subadditivity}, \ref{gencuprev:monotonicity} and \ref{gencuprev:better bounds} that
\begin{align*}
\gencuprev{f}\gr{\pth}+\gencuprev{-f}\gr{\pth}
\geq \gencuprev{f-f}\gr{\pth}
\geq \gencuprev{0}\gr{\pth}
= 0.
\end{align*}
Since \(\gencuprev{f}\gr{\pth}<+\infty\) and \(\gencuprev{-f}\gr{\pth}<+\infty\), it follows from this inequality that \(\gencuprev{f}\gr{\pth}>-\infty\) and \(\gencuprev{-f}\gr{\pth}>-\infty\); that is, \(\gencuprev{f}\gr{\pth}\) and \(\gencuprev{-f}\gr{\pth}\) are both real.
Consequently, the inequality above indeed implies the one in~\eqref{eqn:proof of gencuprev with coherence:to prove}.
\end{relproof}

\subsection{Proofs for results in \zcref{sec:betting strategies}}

\begin{relproof}{\zcref{prop:oscaprocss:co co co}}
That \(\oscapprocs\) contains the constants follows from choosing \(\stopt_1=0=\stopt_2\) in the definition of the trading strategy~\(\betstrat\), and that it's closed under multiplication with positive scalars clearly follows immediately from the definition of trading strategies.
That \(\oscapprocs\) is closed under pointwise addition takes a bit more work to prove formally, but should be intuitively clear.
We'll only prove explicitly that \(\oscapprocs\) is coherent.
To this end, we fix some \(\capital{\betstrat, c}{\bullet}\in\oscapprocs\), \(t\in\posreals\) and \(\pth\in\countpaths\), and set out to show that for any \(\epsilon\in\sposreals\), there is some \(\vpth\in\tostop{t}{\pth}\) such that
\begin{equation}\label{eqn:proof of no sure gain again:inequality with epsilon}
\liminf\capital{\betstrat, c}{\bullet}\gr{\vpth}
< \capital{\betstrat, c}{t}\gr{\pth} + \epsilon.
\end{equation}
So fix any such \(\epsilon\in\sposreals\).

We'll show the existence of the path~\(\vpth\) by constructing it recursively such that the capital doesn't increase by more than~\(\epsilon\).
To this end, we let \(\mathscr{K}\) be the set of indices~\(k\in\set{1, \dots, n}\) such that \(t<\stopt_{k+1}\gr{\pth}\).
In case this index set~\(\mathscr{K}\) is empty, \(\capital{\betstrat, c}{r}\gr{\pth}=\capital{\betstrat, c}{t}\gr{\pth}\) for all \(r\in\coi{t}{+\infty}\), so the path~\(\vpth=\pth\) satisfies the inequality~\eqref{eqn:proof of no sure gain again:inequality with epsilon}.
If \(\mathscr{K}\neq\emptyset\), we let \(k\coloneqq \min\mathscr{K}\) and distinguish two subcases.

If \(\stopt_k\gr{\pth}>t\), let \(\vpth_k\coloneqq\pth\).
Otherwise, we set out to construct some counting path~\(\vpth_k\in\tostop{t}{\pth}\) for which the `currently active' bets with stakes~\(\ustake{k}\gr{\pth}\) and \(\lstake{k}\gr{\pth}\) don't increase the capital after~\(t\) by more than \(\epsilon/n\).
Choose some \(\delta_k\in\sposreals\) such that \(\pth\gr{t+\delta_k}=\pth\gr{t}\) [this is possible because \(\pth\) is continuous from the right], \(-\diffstake{k}\gr{\pth}\llambda\delta_k<\epsilon/n\), \(\delta_k<\stopt_{k+1}\gr{\pth}-t\) and  \(\llambda\delta_k\leq 1\).
It follows from \zcref{eqn:one-sided capital process different increment} that the counting path~\(\vpth_k\) given by
\begin{equation*}
r\mapsto \begin{cases}
\pth\gr{r} & \text{if } r<t+\delta_k,\\
\pth\gr{t}& \text{if } r\geq t+\delta_k \text{ and } \diffstake{k-1}\gr{\pth}\geq 0\\
\pth\gr{t} + \lceil\llambda\gr{r-t}\rceil & \text{if } r\geq t+\delta_k \text{ and } \diffstake{k-1}\gr{\pth}<0
\end{cases}
\end{equation*}
does exactly that.
Note that \(\stopt_{k+1}\gr{\vpth_k}\geq t+\delta_k\)---if this were not the case, then since \(\vpth_k\vert_{\cci{0}{\stopt_{k+1}\gr{\vpth_k}}}=\pth\vert_{\cci{0}{\stopt_{k+1}\gr{\vpth_k}}}\) by definition and \(\stopt_{k+1}\) is a stopping time, it must be that \(\stopt_{k+1}\gr{\pth}=\stopt_{k+1}\gr{\vpth_k}<t+\delta_k\), which contradicts our requirement that \(\delta_k<\stopt_{k+1}\gr{\pth}-t\).
Similarly, note that our construction also ensures that \(\ustake{k}\gr{\vpth_k}=\ustake{k}\gr{\pth}\) and \(\lstake{k}\gr{\vpth_k}=\lstake{k}\gr{\pth}\), as the stakes~\(\ustake{k}\) and \(\lstake{k}\) are \(\stopt_k\)-measurable and \(\vpth_k\vert_{\cci{0}{\stopt_{k}\gr{\pth}}}=\pth\vert_{\cci{0}{\stopt_{k}\gr{\pth}}}\).

If \(\stopt_{k+1}\gr{\vpth_k}=+\infty\) or \(k=n\), our construction ensures that \(\capital{\betstrat, c}{r}\gr{\vpth_k}<\capital{\betstrat, c}{t}\gr{\pth}+\epsilon/n\) for all \(r\in\coi{t}{+\infty}\), which implies the inequality in~\eqref{eqn:proof of no sure gain again:inequality with epsilon}.
If on the other hand \(k<n\) and \(\stopt_{k+1}\gr{\vpth_k}<+\infty\), we let \(\ell\) be the smallest index in~\(\set{k+1, \dots, n}\) such that \(\stopt_\ell\gr{\vpth_k}<\stopt_{\ell+1}\gr{\vpth_k}\).
We repeat the same argument as before, but modifying \(\vpth_k\) from \(\stopt_\ell\gr{\vpth_k}\) onwards rather than \(\pth\) from~\(t\) onwards.
After at most \(n-k\) repetitions of the same argument, we'll end up with a counting path in~\(\tostop{t}{\pth}\) that satisfies the inequality in~\eqref{eqn:proof of no sure gain again:inequality with epsilon}.
\end{relproof}

\subsection{Additional results and relegated proofs for results in \zcref{sec:properties}}
\begin{relproof}{\zcref{prop:monotonicity in lambda}}
It suffices to make the following observation.
Fix any bounded below capital process~\(\capital{\betstrat_\mathrm{o}, c_{\mathrm{o}}}{\bullet}\in\oscapprocs[\cci{\llambdao}{\ulambdao}]\) and any path~\(\pth\in\set{\stopt<+\infty}\).
Then with \(c_{\mathrm{i}}\coloneqq\capital{\betstrat_\mathrm{o}, c_{\mathrm{o}}}{\stopt}\gr{\pth}\) and \(\betstrat_\mathrm{i}\) a copy of the trading strategy~\(\betstrat_\mathrm{o}\) but with stopping times~\(\stopt_k^\mathrm{i}\coloneqq\stopt_k^\mathrm{o}\vee\stopt\) instead of~\(\stopt_k^\mathrm{o}\), the capital process~\(\capital{\betstrat_{\mathrm{i}}, c_{\mathrm{i}}}{\bullet}\in\oscapprocs[\cci{\llambdai}{\ulambdai}]\) has capital~\(c_{\mathrm{i}}\) in~\(\stopt\) for~\(\pth\) and is constructed in such a way that for all \(t\in\ooi{\stopt\gr{\pth}}{+\infty}\) and \(\vpth\in\tostop{\stopt}{\pth}\),
\begin{multline*}
\capital{\betstrat_{\mathrm{i}}, c_{\mathrm{i}}}{t}\gr{\vpth}-\capital{\betstrat_{\mathrm{o}}, c_\mathrm{o}}{t}\gr{\vpth} \\
= \sum_{k=1}^n \gr{\stopt_{k+1}^\mathrm{i}\gr{\vpth}\wedge t-\stopt_k^\mathrm{i}\gr{\vpth}\wedge t} \gr*{\ustake{k}\gr{\vpth}\gr{\ulambdao-\ulambdai}+\lstake{k}\gr{\vpth}\gr{\llambdai-\llambdao}}
\geq 0,
\end{multline*}
where the inequality follows from the assumptions in the statement.
So \(\capital{\betstrat_{\mathrm{i}}, c_{\mathrm{i}}}{\bullet}\) is uniformly bounded below and dominates~\(\capital{\betstrat_\mathrm{o}, c_{\mathrm{o}}}{\bullet}\) on~\(\tostop{\stopt}{\pth}\).
\end{relproof}

The following corollary of \zcref{prop:expected increment} will be useful in the proof of \zcref{the:law of iterated upper expectation} and \zcref{prop:sublinear not greater than watanabe}---through \zcref{lem:more than one jump in small interval}---further on.
\begin{corollary}\label{cor:bound on number of jumps}
For any two time points~\(s,t\in\posreals\) such that \(s\leq t\) and \(m\in\nats\),
\begin{equation*}
\oscuprev{\indica{\set{N_t-N_s\geq m}}}[s]
\leq \frac{\ulambda\gr{t-s}}m.
\end{equation*}
\end{corollary}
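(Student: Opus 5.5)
This is a Markov-type inequality, so the plan is to combine monotonicity and positive homogeneity from \zcref{prop:gencuprev} (which apply by \zcref{prop:oscaprocss:co co co}) with \zcref{prop:expected increment}.

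Since every counting path is increasing, \(N_t-N_s\geq 0\) pointwise. On the event \(\set{N_t-N_s\geq m}\) we have \((N_t-N_s)/m\geq 1\), and off this event the indicator is \(0\leq (N_t-N_s)/m\). Hence
\begin{equation*}
\indica{\set{N_t-N_s\geq m}}\leq \frac1m\gr{N_t-N_s}.
\end{equation*}
Applying \ref{gencuprev:monotonicity} with stopping time \(s\) gives
\(\oscuprev{\indica{\set{N_t-N_s\geq m}}}[s]\leq\oscuprev{\gr{N_t-N_s}/m}[s]\).
Applying \ref{gencuprev:positive homogeneity} with \(\mu=1/m>0\) gives
\(\oscuprev{\gr{N_t-N_s}/m}[s]=\frac1m\oscuprev{N_t-N_s}[s]\).
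By \zcref{prop:expected increment} this last quantity equals \(\frac1m\ulambda\gr{t-s}\), which is the claimed bound.

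Alternatively, the first part of the proof of \zcref{prop:expected increment} already supplies a direct witness. The capital process \(\capital{\betstrat_m, c_m}{\bullet}\) is non-negative, starts with capital \(\ulambda\gr{t-s}/m\), and for \(r\geq t\) equals \((N_t-N_s)/m\). Its \(\liminf\) therefore dominates the indicator everywhere, so the bound follows straight from the definition of the conditional upper expectation.

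No step is a real obstacle. The only point to check is that the indicator is dominated everywhere on \(\countpaths\), not merely on the event itself, and this holds because the increments of counting paths are non-negative.
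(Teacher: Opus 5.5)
Your proof is correct and follows the paper's own argument: dominate the indicator by \((N_t-N_s)/m\), then apply monotonicity and positive homogeneity of the upper expectation together with the expected-increment proposition. Your alternative of using the non-negative stake-\(1/m\) capital process directly is also valid, and it makes explicit that only the upper-bound half of that proposition is needed.
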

\begin{proof}
Since \(\indica{\set{N_t-N_s\geq m}}\leq\gr{N_t-N_s}/m\), this follows immediately from \zcref{prop:expected increment} and \ref{gencuprev:monotonicity} \& \ref{gencuprev:positive homogeneity} in \zcref{prop:gencuprev}.
\qed\end{proof}

We defer proving \zcref{prop:expected renewal time} to \zcref{asec:proof of expected renewal time} further on.
The next item on our menu of unproven results is \zcref{the:law of iterated upper expectation}.
Our proof for this result will simplify quite a bit thanks to the following intermediary result and \zcref{cor:trick with values in conditional} to \zcref{prop:shifted upper expectation}.
\begin{lemma}\label{lem:stopt-measurable then stop there}
Fix some \(s,t\in\posreals\) such that \(s\leq t\).
Then for all \(t\)-measurable~\(f\in\extvars\) and \(\pth\in\countpaths\),
\begin{multline*}
\oscuprev{f}[s]\gr{\pth}
= \inf\Big\{\capital{\betstrat, c}{s}\gr{\pth}\colon \capital{\betstrat, c}{\bullet}\in\oscapprocs, \inf\capital{\betstrat, c}{\bullet}>-\infty, \\ \gr{\forall \vpth\in\tostop{s}{\pth}}~\capital{\betstrat, c}{t}\gr{\vpth}\geq f\gr{\vpth}\Big\}.
\end{multline*}
\end{lemma}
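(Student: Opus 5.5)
We prove the equality by two inequalities, writing \(\uprev'\) for the infimum on the right-hand side.

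\emph{The inequality \(\uprev'\leq\oscuprev{f}[s]\gr{\pth}\).} Fix any bounded below \(\capital{\betstrat, c}{\bullet}\in\oscapprocs\) with \(\liminf\capital{\betstrat, c}{\bullet}\geq_{\tostop{s}{\pth}} f\). Since \(t\geq s\), for every \(\vpth\in\tostop{s}{\pth}\) we have \(\tostop{t}{\vpth}\subseteq\tostop{s}{\pth}\). Because \(f\) is \(t\)-measurable, it is constant on \(\tostop{t}{\vpth}\) with value \(f\gr{\vpth}\). Coherence of \(\oscapprocs\) (\zcref{prop:oscaprocss:co co co}), in the form of \zcref{lem:capital above remaining infimum} applied with stopping time \(t\) and path \(\vpth\), then yields
\[
\capital{\betstrat, c}{t}\gr{\vpth}\geq\inf f\vert_{\tostop{t}{\vpth}}=f\gr{\vpth}.
\]
So every process admissible for \(\oscuprev{f}[s]\gr{\pth}\) is also admissible for \(\uprev'\), with the same value at \(s\), and hence \(\uprev'\leq\oscuprev{f}[s]\gr{\pth}\).

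\emph{The inequality \(\oscuprev{f}[s]\gr{\pth}\leq\uprev'\).} Fix any bounded below \(\capital{\betstrat, c}{\bullet}\) with \(\capital{\betstrat, c}{t}\geq f\) on \(\tostop{s}{\pth}\). We stop the strategy at \(t\): replace each stopping time \(\stopt_k\) by \(\stopt_k\wedge t\), which is again a stopping time, and keep the stakes. The stakes remain measurable, because \(\stopt_k\wedge t\leq\stopt_k\), and on \(\set{\stopt_k\wedge t<\stopt_k}\) the bet is void. Since \(N_{\stopt_{k+1}\wedge t\wedge r}=N_{\stopt_{k+1}\wedge r\wedge t}\), comparing with \zcref{eqn:one-sided simple capital process} shows that the stopped process equals \(\capital{\betstrat, c}{r\wedge t}\) for all \(r\in\posreals\). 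It is therefore still a bounded below element of \(\oscapprocs\) with the same value at \(s\), and its \(\liminf\) equals \(\capital{\betstrat, c}{t}\geq f\) on \(\tostop{s}{\pth}\). So it is admissible for \(\oscuprev{f}[s]\gr{\pth}\), which gives the reverse inequality.

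The main obstacle is the technical check in the second step that stopping at \(t\) really produces a legitimate one-sided elementary strategy. The delicate point is the \(\stopt_k\wedge t\)-measurability of \(\ustake{k}\) and \(\lstake{k}\) on their domains. On \(\set{\stopt_k\leq t}\) this is immediate. On \(\set{\stopt_k>t}\), however, the stake multiplies a zero increment, so it may harmlessly be redefined as \(0\) there. One must then confirm that the redefined stake is \(\stopt_k\wedge t\)-measurable, using that \(\set{\stopt_k>t}\) is determined by the path on \(\cci{0}{t}\).
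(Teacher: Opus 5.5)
Your proof is correct and follows the paper's argument: for the first inequality you use coherence through \zcref{lem:capital above remaining infimum} at time \(t\) and path \(\vpth\), together with \(\tostop{t}{\vpth}\subseteq\tostop{s}{\pth}\) and the \(t\)-measurability of \(f\), and for the second you stop the strategy at \(t\) by replacing each \(\stopt_k\) with \(\stopt_k\wedge t\). Redefining the stakes as \(0\) on \(\set{\stopt_k>t}\) fills in a detail the paper only asserts, and it also settles the domain requirement, since the original stakes need only be defined on \(\set{\stopt_k<+\infty}\); note, though, that \(\stopt_k\wedge t\leq\stopt_k\) does not by itself give \(\stopt_k\wedge t\)-measurability, because measurability with respect to a smaller stopping time is the stronger condition, so it is this redefinition, combined with \(\set{\stopt_k>t}\) being determined by the path on \(\cci{0}{t}\), that does the work.
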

\begin{proof}
Since \(\oscapprocs\) is coherent, it follows immediately from \zcref{lem:capital above remaining infimum} and the \(t\)-measurability of~\(f\) that for any bounded below capital process \(\capital{\betstrat, c}{\bullet}\in\oscapprocs\) that superhedges~\(f\) on~\(\tostop{s}{\pth}\),
\begin{equation*}
\capital{\betstrat, c}{t}\gr{\vpth}
\geq \inf f\vert_{\tostop{t}{\vpth}}
= f\gr{\vpth}
\quad\text{for all }
\vpth\in\tostop{s}{\pth}.
\end{equation*}

Conversely, suppose there is some bounded one-sided capital process~\(\capital{\betstrat, c}{\bullet}\in\oscapprocs\) such that \(\capital{\betstrat, c}{t}\gr{\vpth}\geq f\gr{\vpth}\) for all \(\vpth\in\tostop{s}{\pth}\).
Let \(\betstrat'\) be the betting strategy that copies~\(\betstrat\) but replaces each stopping time~\(\stopt_k\) by~\(\stopt_k'\coloneqq\stopt_k\wedge t\).
Then \(\betstrat'\) is a valid trading strategy, and \(\capital{\betstrat', c}{\bullet}\in\oscapprocs\) is bounded below and defined in such a way that
\begin{equation*}
\capital{\betstrat', c}{t}\gr{\vpth}
= \capital{\betstrat, c}{t}\gr{\vpth}
\quad\text{for all } \vpth\in\countpaths.
\end{equation*}
Consequently,
\begin{equation*}
\liminf\capital{\betstrat', c}{\bullet}\gr{\vpth}
= \capital{\betstrat', c}{t}\gr{\vpth}
= \capital{\betstrat', c}{t}\gr{\vpth}
\geq f\gr{\vpth}
\quad\text{for all } \vpth\in\tostop{s}{\pth}.
\end{equation*}
This concludes our proof for the equality in the statement.
\qed\end{proof}
\begin{relproof}{\zcref{cor:trick with values in conditional}}
This follows immediately from \zcref{prop:shifted upper expectation} once one realises that for all \(\vpth\in\countpaths\),
\begin{equation*}
\br[\big]{g\gr{N_{t_1}, \dots, N_{t_{k+1}}}}\gr{\pth\oplus_{t_k}\vpth}
= g\gr{\pth\gr{t_1}, \dots, \pth\gr{t_k}, \pth\gr{t_k}+\vpth\gr{t_{k+1}-t_k}}.
\end{equation*}
\end{relproof}
\begin{relproof}{\zcref{the:law of iterated upper expectation}}
Fix any \(\pth\in\countpaths\) and \(\epsilon\in\sposreals\), and let \(f\coloneqq g\gr{N_{t_1}, \dots, N_{t_{k+1}}}\).
Since \(f=\gr{f-\inf f}+\inf f\) and \(\inf\gr{f-\inf f}\geq 0\), thanks to \ref{gencuprev:constant additivity} we may assume without loss of generality that \(\inf f\geq 0\).
Furthermore, since \(f\) is positive and bounded, it follows from \ref{gencuprev:better bounds} that so are \(\oscuprev{f}[t_1]\) and \(\oscuprev{f}[t_k]\).

Since \(f\) is \(t_{k+1}\)-measurable, it follows from \zcref{lem:stopt-measurable then stop there} that there is some capital process~\(\capital{\betstrat_1, c_1}{\bullet}\in\oscapprocs\) that is bounded below with \(\oscuprev{f}[t_1]\gr{\pth}\leq\capital{\betstrat_1, c_1}{t_1}\gr{\pth}<\oscuprev{f}[t_1]\gr{\pth}+\epsilon\) and \(\capital{\betstrat_1, c_1}{t_{k+1}}\geq_{\tostop{t_1}{\pth}} f\).
Consequently, it must be that \(\oscuprev{f}[t_k]\gr{\vpth}\leq\capital{\betstrat_1, c_1}{t_k}\gr{\vpth}\) for all \(\vpth\in\tostop{t_k}{\pth}\); thanks to \ref{gencuprev:monotonicity} and \zcref{lem:stopt-measurable then stop there}, we infer from this that
\begin{equation*}
\oscuprevstar{\oscuprev{f}[t_k]}[t_1]\gr{\pth}
\leq \oscuprevstar{\capital{\betstrat_1, c_1}{t_k}}[t_1]\gr{\pth}.
\end{equation*}
Clearly, the \(t_k\)-measurable varibale~\(\capital{\betstrat_1, c_1}{t_k}\) is hedged on~\(\tostop{t_1}{\pth}\) (and at \(t_k\)) by \(\capital{\betstrat_1, c_1}{\bullet}\); consequently,
\begin{equation}\label{eqn:proof of law of iterated upper expectation:first inequality}
\oscuprevstar{\oscuprev{f}[t_k]}[t_1]\gr{\pth}
\leq \capital{\betstrat_1, c_1}{t_1}\gr{\pth}
< \oscuprev{f}[t_1]\gr{\pth}+\epsilon.
\end{equation}

For the converse inequality, recall from \zcref{sec:setup} that \(\oscuprev{f}[t_k]\) is \(t_k\)-measurable.
Hence, it follows from \zcref{lem:stopt-measurable then stop there} that there is some bounded below capital process~\(\capital{\betstrat, c}{\bullet}\in\oscapprocs\) such that
\begin{equation*}
\oscuprevstar{\oscuprev{f}[t_k]}[t_1]\gr{\pth}
\leq \capital{\betstrat, c}{t_1}\gr{\pth}
< \oscuprevstar{\oscuprev{f}[t_k]}[t_1]\gr{\pth}+\frac{\epsilon}3
\end{equation*}
and \(\capital{\betstrat, c}{t_{k}}\geq_{\tostop{t_1}{\pth}} \oscuprev{f}[t_k]\).
Let \(\Delta\coloneqq t_{k+1}-t_k\), and fix some natural number~\(m\) such that \(3\gr{\sup f}\ulambda\Delta<m\epsilon\).
Then by \zcref{cor:bound on number of jumps}, there is some positive capital process~\(\capital{\betstrat_0, c_0}{\bullet}\in\oscapprocs\) with \(\capital{\betstrat_0, c_0}{t_1}\gr{\pth}\leq\ulambda\Delta/m+\epsilon/\gr{3\sup f}\) such that \(\capital{\betstrat_0, c_0}{t_k}\geq1\) for all \(\vpth\in\tostop{t_1}{\pth}\) with \(\vpth\gr{t_k}\geq\vpth\gr{t_1}+m\).
On the other hand, for all \(z=z_{1:k}\in\gr{\posints}^k\) with \(\pth\gr{t_1}=z_1\leq z_2 \leq \cdots \leq z_k<\pth\gr{t_1}+m\), thanks to \zcref{lem:stopt-measurable then stop there} there is some positive capital process~\(\capital{\betstrat_z, c_z}{\bullet}\in\oscapprocs\) such that
\begin{equation*}
\osuprev{g\gr{z_1, \dots, z_k, z_k+N_{\Delta}}}
\leq
c_z
< \osuprev{g\gr{z_1, \dots, z_k, z_k+N_{\Delta}}} + \frac\epsilon3
\end{equation*}
and
\begin{equation*}
\capital{\betstrat_z, c_z}{\Delta}\gr{\vpth}
\geq g\gr{z_1, \dots, z_k, z_k+\vpth\gr{\Delta}}
\quad\text{for all } \vpth\in\countpaths.
\end{equation*}
Consider now the trading strategy~\(\betstrat'\) that trades according to~\(\betstrat\) until~\(t_k\) and, for \(\vpth\in\tostop{t_1}{\pth}\) with \(\vpth\gr{t_k}<\pth\gr{t_1}+m\), from \(t_k\) onwards trades according to~\(\betstrat_{\gr{\vpth\gr{t_1} \dots, \vpth\gr{t_k}}}\) shifted by~\(t_k\)---it shouldn't take too much effort from the reader to understand that this is still a valid trading strategy.
Let \(c'\coloneqq c+\epsilon/3\).

By construction, and thanks to \zcref{cor:trick with values in conditional}, \(\capital{\betstrat', c'}{t_1}\gr{\pth}=\capital{\betstrat, c}{t_1}\gr{\pth}+\epsilon/3\), and \(\capital{\betstrat', c'}{t_{k+1}}\gr{\vpth}\geq f\gr{\vpth}\) for all \(\vpth\in\tostop{t_1}{\pth}\) such that \(\vpth\gr{t_k}<\pth\gr{t_1}+m\).
Additionally, for all \(\vpth\in\tostop{t_1}{\pth}\)---so in particular for those with \(\vpth\gr{t_k}\geq\pth\gr{t_1}+m\)---\(\gr{\sup f}\capital{\betstrat_0, c_0}{t_k}\gr{\vpth}\geq\gr{\sup f}\indica{\set{N_{t_k}-N_{t_1}\geq m}}\gr{\vpth}\).
Since furthermore \(\oscapprocs\) is closed under positive linear combinations, it follows that
\begin{align}
\oscuprevstar{f}[t_1]\gr{\pth}
&\leq \capital{\betstrat', c'}{t_1}\gr{\pth} + \gr{\sup f}\capital{\betstrat_0, c_0}{t_1}\gr{\pth} \nonumber\\
&\leq \capital{\betstrat, c}{t_1}\gr{\pth}+\frac\epsilon3+\gr{\sup f}\frac{\ulambda\Delta}{m} + \frac\epsilon3 \nonumber\\
&< \capital{\betstrat, c}{t_1}\gr{\pth}+\epsilon \nonumber\\
&< \oscuprevstar{\oscuprev{f}[t_k]}[t_1]\gr{\pth}+\epsilon.
\label{eqn:proof of law of iterated upper expectation:second inequality}
\end{align}
Since \(\epsilon\) can be made arbitrarily small, the equality in the statement follows from the inequalities~\eqref{eqn:proof of law of iterated upper expectation:first inequality} and \eqref{eqn:proof of law of iterated upper expectation:second inequality}.
\end{relproof}

In our proof for \zcref{prop:sublinear not greater than watanabe}, we'll need a bound on the upper probability of having more than one jump in one of a sequence of consecutive intervals of the same length.
\begin{lemma}\label{lem:more than one jump in small interval}
Fix some \(s,t\in\posreals\) such that \(s<t\).
For all \(n\in\nats\), we define \(\Delta_n\coloneqq\gr{t-s}/n\), \(t^n_k\coloneqq s+\gr{k-1}\Delta_n\) for all \(k\in\set{1, \dots, n+1}\)  and \[A_n
\coloneqq \set{\pth\in\countpaths\colon \gr{\forall k\in\set{1, \dots, n}}~\pth\gr{t^n_{k+1}}\leq\pth\gr{t^n_k}+1 }.\]
Then
\begin{equation*}
\gr{\forall\epsilon\in\sposreals}
\gr{\exists n_\epsilon\in\nats}
\gr{\forall n\in\nats, n\geq n_\epsilon}~
\oscuprev{\indica{\compl{A_n}}}[s]
<\epsilon
\end{equation*}
\end{lemma}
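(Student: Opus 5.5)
The plan is to reduce the statement to a bound on the upper probability of two or more jumps in a single subinterval, and to show that this bound is \emph{quadratic} in~\(\Delta_n\).
Summing \(n\) such bounds then gives a total of order \(1/n\).

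First I observe that \(\indica{\compl{A_n}}\leq\sum_{k=1}^n\indica{\set{N_{t^n_{k+1}}-N_{t^n_k}\geq2}}\).
By \ref{gencuprev:monotonicity} and \ref{gencuprev:subadditivity} of \zcref{prop:gencuprev}, which applies thanks to \zcref{prop:oscaprocss:co co co}, it therefore suffices to show that for every \(k\),
\begin{equation*}
\oscuprev{\indica{\set{N_{t^n_{k+1}}-N_{t^n_k}\geq2}}}[s]\leq\ulambda^2\Delta_n^2.
\end{equation*}
The right-hand side of the resulting bound is then \(n\ulambda^2\Delta_n^2=\ulambda^2\gr{t-s}^2/n\), which is smaller than \(\epsilon\) for all \(n\) beyond some \(n_\epsilon\).

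The crude bound of \zcref{cor:bound on number of jumps} with \(m=2\) only gives \(\ulambda\Delta_n/2\) per interval, and summing these does not vanish.
This is the main obstacle, and I would overcome it with an explicit two-stage one-sided strategy.
Fix \(k\), write \(a\coloneqq t^n_k\), \(b\coloneqq t^n_{k+1}\), and use the stopping times
\begin{equation*}
\stopt_1\coloneqq a,\quad \stopt_2\coloneqq\rho_a\wedge b,\quad \stopt_3\coloneqq b,
\end{equation*}
where \(\rho_a\) is as in \zcref{prop:expected renewal time} and the minimum with \(b\) keeps everything a stopping time.
The stakes are
\begin{equation*}
\ustake{1}\coloneqq\ulambda\Delta_n,\quad \ustake{2}\coloneqq1,\quad \lstake{1}=\lstake{2}\coloneqq0,
\end{equation*}
all constants, hence bounded, positive and measurable, and the initial capital is \(c\coloneqq\ulambda^2\Delta_n^2\).
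The strategy is idle before \(a\geq s\), so its capital at \(s\) equals \(c\).

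Next I check, using \zcref{eqn:one-sided capital process different increment}, that this capital process is positive and that its capital at \(b\) dominates \(\indica{\set{N_b-N_a\geq2}}\).
On \([a,\stopt_2)\) there is no jump, so the capital is \(c-\ulambda^2\Delta_n(r-a)\geq0\).
If \(\rho_a<b\), then at \(\rho_a\) the capital is \(c+\ulambda\Delta_n\gr{1-\ulambda(\rho_a-a)}\geq\ulambda\Delta_n\).
From \(\rho_a\) on, the capital is at least \(\ulambda\Delta_n-\ulambda(r-\rho_a)+(N_r-N_{\rho_a})\geq N_r-N_{\rho_a}\geq0\), because \(r-\rho_a\leq\Delta_n\).
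At \(b\) this is therefore at least \(1\) whenever a second jump occurs in \((\rho_a,b]\), which is exactly the event \(\set{N_b-N_a\geq2}\).
If instead no jump occurs before \(b\), the capital at \(b\) is still nonnegative.
Hence \(\capital{\betstrat,c}{b}\geq\indica{\set{N_b-N_a\geq2}}\) everywhere.

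Finally, since the indicator is \(b\)-measurable, \zcref{lem:stopt-measurable then stop there} (with \(t=b\)) yields the per-interval bound \(\ulambda^2\Delta_n^2\), and summing over \(k\) finishes the argument.
The only delicate point is the verification in the previous paragraph that the capital stays nonnegative and reaches \(1\) on the two-jump event.
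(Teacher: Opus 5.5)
Your proof is correct, and it takes a different route from the paper's. Your second-stage bet is exactly the bet the paper places in each grid interval: a unit upper stake from the first jump in the interval up to its right endpoint. The difference lies in how that bet's drift cost, at most \(\ulambda\Delta_n\) per interval, is paid for.

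The paper runs a single strategy over all intervals and pays this drift out of the initial capital. To keep the total cost \(m\ulambda\Delta_n\) bounded, it stops trading after \(m\) jumps. It must then hedge the event \(\set{N_t-N_s\geq m}\) separately, using the Markov-type bound \(\ulambda(t-s)/m\) of \zcref{cor:bound on number of jumps}. Balancing the terms forces \(m\) of order \(1/\epsilon\), and hence \(n_\epsilon\) of order \(\ulambda^2(t-s)^2/\epsilon^2\).

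You instead finance the drift with a preliminary bet of size \(\ulambda\Delta_n\) on the first jump. The cost is thus only paid `in proportion to' the upper chance of a first jump. This gives the quadratic per-interval bound \(\ulambda^2\Delta_n^2\), mirroring the order-\(\Delta^2\) probability of two or more jumps in the precise case. After subadditivity, the total is \(\ulambda^2(t-s)^2/n\).

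Your route therefore gives a better rate, with \(n_\epsilon\) of order \(1/\epsilon\) rather than \(1/\epsilon^2\). It is also more self-contained. It needs neither \zcref{cor:bound on number of jumps} nor the truncation at \(m\) jumps with its recursive stopping-time construction. Since your capital process is nonnegative and constant after \(b\), even the appeal to \zcref{lem:stopt-measurable then stop there} could be dropped.

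Both arguments rest equally on the cone property from \zcref{prop:oscaprocss:co co co}: yours through subadditivity, the paper's by adding two capital processes.
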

\begin{proof}
Fix any \(\pth\in\countpaths\), \(\epsilon\in\sposreals\) and \(m\in\nats\) such that \(m>3\ulambda\gr{t-s}/\epsilon\).
Then by \zcref{cor:bound on number of jumps,lem:stopt-measurable then stop there}, there is some capital process~\(\capital{\betstrat_m, c_m}{\bullet}\in\oscapprocs\) such that \(\capital{\betstrat_m, c_m}{s}\gr{\pth}<\ulambda\gr{t-s}/m+\epsilon/3\) and \(\capital{\betstrat_m, c_m}{t}\gr{\vpth}\geq\indica{\set{N_t-N_s\geq m}}\gr{\vpth}\) for all \(\vpth\in\tostop{s}{\pth}\).

Now consider the elementary betting strategy~\(G\) that bets with unit stake from the moment there is some jump in the interval~\(\cci{t^n_k}{t^n_{k+1}}\) and stops at the end of this interval, and stops trading altogether once the path has increased by more than \(m\); more formally, we consider the stopping times \(\stopt_1\coloneqq\stopt'_1\wedge\sigma\), \dots, \(\stopt_{2m+1}\coloneqq\stopt'_{2m+1}\wedge\sigma\) with
\begin{equation*}
\sigma
\colon
\countpaths\to\posextreals
\colon \vpth\mapsto \inf\set{r\in\posreals\colon \vpth\gr{r}\geq \vpth\gr{s}+m}
\end{equation*}
and with \(\stopt'_1\), \dots, \(\stopt'_{2m+1}\) defined recursively by \(\stopt'_1\coloneqq t^n_1=s\) and, for all \(j\in\set{1, \dots, m}\), by
\begin{align*}
\stopt'_{2j}
&\colon \countpaths \to\posextreals
\colon \vpth\mapsto \inf \set*{r\in\posreals\colon \stopt_{2j-1}\gr{\vpth}<r\leq t, \vpth\gr{r}>\lim_{r_-\nearrow r} \vpth\gr{r_-}}
\shortintertext{and}
\stopt'_{2j+1}
&\colon \countpaths \to\posextreals
\colon \vpth\mapsto \inf \set[\big]{t^n_k\colon k\in\set{1, \dots, n+1}, t^n_k\geq \stopt_{2j}\gr{\vpth}},
\end{align*}
and the stakes \(\ustake{2j}\coloneqq 1\) , \(\ustake{2j-1}\coloneqq0\) and \(\lstake{2j}\coloneqq0\eqqcolon\lstake{2j-1}\).
Then with \(c\coloneqq\epsilon/3\), our construction ensures that \(\capital{\betstrat, c}{t}\gr{\vpth}\geq c-m\ulambda\Delta_n\) for all \(\vpth\in\countpaths\), and in particular that
\begin{equation*}
\capital{\betstrat, c}{t}\gr{\vpth}
\geq c - m\ulambda\Delta_n +1
\quad\text{for all } \vpth\in\set{N_t-N_s<m}\cap \compl{A_n}.
\end{equation*}
Let \(n_\epsilon\coloneqq 3m\ulambda\gr{t-s}/\epsilon\); then for all \(n\in\nats\) such that \(n\geq n_\epsilon\),
\begin{equation*}
m\ulambda\Delta_n
= m\ulambda\gr{t-s}/n
\leq \frac\epsilon3
= c.
\end{equation*}
Consequently, for all such~\(n\geq n_\epsilon\), \(\indica{\compl{A_n}}\) is superhedged (in~\(t\)) on~\(\tostop{s}{\pth}\) by \(\capital{\betstrat, c}{\bullet}+\capital{\betstrat_m, c_m}{\bullet}\in\oscapprocs\).
Due to \zcref{lem:stopt-measurable then stop there}, we infer from all this that
\begin{equation*}
\oscuprev{\indica{\compl{A_n}}}[s]\gr{\pth}
\leq \capital{\betstrat, c}{s}\gr{\pth}+\capital{\betstrat_m, c_m}{s}\gr{\pth}
< c+\frac{\ulambda\gr{t-s}}{m}+\frac\epsilon3
< \epsilon,
\end{equation*}
which is what we needed to prove.
\qed\end{proof}
\begin{relproof}{\zcref{prop:sublinear not greater than watanabe}}
In the degenerate case \(s=t\), the equality in the statement is immediate because (i) \(\upoisssg_0=\eye\); and (ii) \(\oscuprev{\bullet}[s]\) maps the \(s\)-measurable variable~\(g\gr{N_s}\) to itself due to \ref{gencuprev:better bounds}.
Henceforth, we therefore assume that \(s<t\).

We fix any \(\pth\in\countpaths\) and \(g\in\bfns\), and set out to prove that
\begin{equation*}
\oscuprev{g\gr{N_t}}[s]\gr{\pth}
\leq \br[\big]{\upoisssg_{t-s}g}\gr{\pth\gr{s}}.
\end{equation*}
For all \(n\in\nats\), let \(\Delta_n\), \(t^n_1\), \dots, \(t^n_{n+1}\) and \(A_n\) be defined as in  \zcref{lem:more than one jump in small interval}, and let \(g^n_k\coloneqq \gr{\eye+\Delta_n \upoissgen}^{n+1-k} g\) for all \(k\in\set{1, \dots, n+1}\).
Note that \(g^n_k=\gr{\eye+\Delta_n\upoissgen}g^n_{k+1}\) and that the stopping time
\begin{equation*}
\sigma_n
\colon \countpaths \to \posextreals
\colon \vpth\mapsto \inf\bigcup_{k=1}^n\set{r\in\cci{t^n_k}{t^n_{k+1}}\colon \vpth\gr{r}\geq \vpth\gr{t^n_k}+2}
\end{equation*}
is equal to \(+\infty\) on the event~\(A_n\).

Fix any \(\epsilon\in\sposreals\).
Then by \zcref{lem:more than one jump in small interval} and \cite[Theorem~3.1]{2025Erreygers}, there is some \(n\in\nats\) such that \(\delta\coloneqq\gr{\sup g-\inf g}\llambda\Delta_n<\epsilon\), \(0\leq\oscuprev{\indica{\compl{A_n}}}[s]\gr{\pth}<\epsilon/2(\sup g-\inf g)\) and \(\abs{\br{\upoisssg_{t-s}g}\gr{\pth\gr{s}}-g^n_1\gr{\pth\gr{s}}}<\epsilon\).
For this \(n\), we consider the initial capital~\(c\coloneqq g^n_1\gr{\pth\gr{s}}+\delta\) in combination with the elementary betting strategy~\(\betstrat\) with stopping times \(\stopt_1\coloneqq t^n_1\wedge\sigma_n\), \dots, \(\stopt_{n+1}\coloneqq t^n_{n+1}\wedge\sigma_n\) and stakes defined for all \(k\in\set{1, \dots, n}\) and \(\vpth\in\countpaths\) by
\begin{align*}
\ustake{k}\gr{\vpth}
&\coloneqq \gr[\Big]{+g^n_{k+1}\gr[\big]{\vpth\gr{\stopt_k}+1}-g^n_{k+1}\gr[\big]{\vpth\gr{\stopt_k}}}\vee 0
\shortintertext{and}
\lstake{k}\gr{\vpth}
&\coloneqq \gr[\Big]{-g^n_{k+1}\gr[\big]{\vpth\gr{\stopt_k}+1}+g^n_{k+1}\gr[\big]{\vpth\gr{\stopt_k}}}\vee 0.
\end{align*}
This way, for all \(\vpth\in\tostop{s}{\pth}\), \(k\in\set{1, \dots, n}\) with \(\stopt_k\gr{\vpth}<\sigma_n\gr{\vpth}\) and \(r\in\cci{\stopt_k\gr{\vpth}}{\stopt_{k+1}\gr{\vpth}}\) such that \(r<\sigma_n\gr{\vpth}\),
\begin{multline}
\label{eqn:proof of sublinear watanabe:capital change before sigma}
\capital{\betstrat, c}{r}\gr{\vpth}
- \capital{\betstrat , c}{\stopt_k}\gr{\vpth} \\
= g^n_{k+1}\gr{\vpth\gr{r}}-g^n_k\gr{\vpth\gr{\stopt_k}} + \gr[\big]{\ustake{k}\gr{\vpth}\ulambda-\lstake{k}\gr{\vpth}\llambda}\gr{t^n_{k+1}-r}.
\end{multline}
To verify this equality, observe that by construction of the capital process~\(\capital{\betstrat, c}{\bullet}\),
\begin{align*}
\MoveEqLeft
\capital{\betstrat, c}{r}\gr{\vpth}
- \capital{\betstrat , c}{\stopt_k}\gr{\vpth} \\
&= \ustake{k}\gr{\vpth}\gr[\big]{\vpth\gr{r}-\vpth\gr{\stopt_k}-\ulambda\gr{r-\stopt_k\gr{\vpth}}} \\&\qquad - \lstake{k}\gr{\vpth}\gr[\big]{\vpth\gr{r}-\vpth\gr{\stopt_k}-\llambda\gr{r-\stopt_k\gr{\vpth}}} \\
&= \ustake{k}\gr{\vpth}\gr[\big]{\vpth\gr{r}-\vpth\gr{\stopt_k}-\ulambda\Delta_n} + \ustake{k}\gr{\vpth}\ulambda\gr{t^n_{k+1}-r} \\&\qquad - \lstake{k}\gr{\vpth}\gr[\big]{\vpth\gr{r}-\vpth\gr{\stopt_k}-\llambda\Delta_n} - \lstake{k}\llambda\gr{\vpth}\gr{t^n_{k+1}-r}.
\end{align*}
Recall that \(g^n_k=\gr{\eye+\Delta_n\upoissgen}g^n_{k+1}=g^n_{k+1}+\Delta_n\upoissgen g^n_{k+1}\) by definition.
Hence, if \(\ustake{k}\gr{\vpth}>0\) (and therefore \(\lstake{k}\gr{\vpth}=0\)),
\begin{align*}
\MoveEqLeft
g^n_{k+1}\gr{\vpth\gr{r}}-g^n_k\gr{\vpth\gr{\stopt_k}} \\
&= g^n_{k+1}\gr[\big]{\vpth\gr{r}} - g^n_{k+1}\gr[\big]{\vpth\gr{\stopt_k}} - \ulambda\Delta_n \gr[\Big]{g^n_{k+1}\gr[\big]{\vpth\gr{\stopt_k}+1}-g^n_{k+1}\gr[\big]{\vpth\gr{\stopt_k}}}\\
&= g^n_{k+1}\gr[\big]{\vpth\gr{r}} - g^n_{k+1}\gr[\big]{\vpth\gr{\stopt_k}} - \ulambda\Delta_n \ustake{k}\gr{\vpth};
\end{align*}
a similar equality holds if \(\lstake{k}\gr{\vpth}>0\) (and therefore \(\ustake{k}\gr{\vpth}=0\)) with \(\llambda\) in place of~\(\llambda\).
Because \(r<\sigma_n\gr{\vpth}\) by assumption, it's guaranteed that \(0\leq\vpth\gr{r}-\vpth\gr{\stopt_k}\leq 1\); it's therefore straightforward to verify that if \(\ustake{k}\gr{\vpth}>0\),
\begin{equation*}
\ustake{k}\gr{\vpth}\gr[\big]{\vpth\gr{r}-\vpth\gr{\stopt_k}-\ulambda\Delta_n}
= g^n_{k+1}\gr{\vpth\gr{r}}-g^n_k\gr{\vpth\gr{\stopt_k}},
\end{equation*}
and similarly for \(\lstake{k}\gr{\pth}>0\); this verifies \zcref{eqn:proof of sublinear watanabe:capital change before sigma}.
In the particular case that \(\stopt_{k+1}\gr{\pth}<\sigma_n\gr{\pth}\), \zcref{eqn:proof of sublinear watanabe:capital change before sigma} for \(r=\stopt_{k+1}\gr{\pth}\) reduces to
\begin{equation}\label{eqn:proof of sublinear watanabe:capital change between stopt before sigma}
\capital{\betstrat, c}{\stopt_{k+1}}\gr{\pth}
- \capital{\betstrat , c}{\stopt_k}\gr{\pth}
= g^n_{k+1}\gr{\pth\gr{\stopt_{k+1}}}-g^n_k\gr{\pth\gr{\stopt_{k+1}}}.
\end{equation}

Since \(\capital{\betstrat, c}{\stopt_1}\gr{\vpth}=g^n_1\gr{\vpth\gr{s}}+\delta\) for all \(\vpth\in\tostop{s}{\pth}\), it follows from \zcref{eqn:proof of sublinear watanabe:capital change between stopt before sigma} that for all \(k\in\set{1, \dots, n}\) such that \(\stopt_{k+1}\gr{\vpth}<\sigma_n\gr{\vpth}\),
\begin{equation}\label{eqn:proof of sublinear watanabe:capital on stopt}
\capital{\betstrat, c}{\stopt_k}\gr{\vpth}
= g^n_k\gr[\big]{\vpth\gr{\stopt_k}}+\delta.
\end{equation}
Now recall that \(\sigma_n\gr{\pth}=+\infty\) for any \(\vpth\in A_n\cap \tostop{t_1}{\pth}\), so it follows from the preceding equality for \(k=n+1\) that
\begin{equation*}
\capital{\betstrat, c}{t}\gr{\vpth}
= \capital{\betstrat, c}{\stopt_{n+1}}\gr{\vpth}
= g^n_{n+1}\gr[\big]{\pth\gr{\stopt_{n+1}}}+\delta \\
= g\gr[\big]{\vpth\gr{t}}+\delta.
\end{equation*}
Next, observe that for \(\vpth\in\compl{A_n}\cap\tostop{s}{\pth}\), there is some largest~\(k\in\set{1, \dots, n}\) such that \(\stopt_k\gr{\vpth}<\sigma_n\gr{\vpth}\) (and of course \(\stopt_{k+1}\gr{\vpth}=\sigma_n\gr{\vpth}\)); then since \(\vpth\) has unit jumps,
\begin{equation*}
\lim_{r\nearrow \sigma_n\gr{\vpth}} \capital{\betstrat, c}{\sigma_n}\gr{\vpth}
- \capital{\betstrat, c}{r}\gr{\vpth}
= \lim_{r\nearrow\sigma_n\gr{\vpth}} \diffstake{k}\gr{\vpth}\gr{\vpth\gr{\sigma_n}-\vpth\gr{r}}
= \diffstake{k}\gr{\pth}.
\end{equation*}
It follows from this, \zcref{eqn:proof of sublinear watanabe:capital change before sigma,eqn:proof of sublinear watanabe:capital on stopt} that
\begin{align*}
\capital{\betstrat, c}{\sigma_n}\gr{\vpth}
&= \lim_{r\nearrow\sigma_n\gr{\vpth}}\delta+g^n_{k+1}\gr{\vpth\gr{r}} + \gr[\big]{\ustake{k}\gr{\vpth}\ulambda-\lstake{k}\gr{\vpth}\llambda}\gr{t^n_{k+1}-r} + \diffstake{k}\gr{\vpth} \\
&\geq \delta+\inf g - \gr{\sup g-\inf g}\llambda\Delta_n - \gr{\sup g-\inf g} \\
&= 2\inf g-\sup g.
\end{align*}
Recall now that \(\oscuprev{\indica{\compl{A_n}}}\gr{\pth}<\epsilon/2\gr{\sup g-\inf g}\), so there is some positive capital process~\(\capital{\betstrat', c'}{\bullet}\in\oscapprocs\) such that \(\capital{\betstrat', c'}{s}\gr{\pth}<\epsilon/2\gr{\sup g-\inf g}\) and \(\capital{\betstrat', c'}{t}\gr{\vpth}\geq\indica{\compl{A_n}}\gr{\vpth}\) for all \(\vpth\in\tostop{s}{\pth}\).
Since \(\oscapprocs\) is a cone and \(g\gr{N_t}\) is \(t\)-measurable, we conclude from this and \zcref{lem:stopt-measurable then stop there} that
\begin{align*}
\oscuprev{g\gr{N_t}}[s]\gr{\pth}
&\leq \capital{\betstrat, c}{s}\gr{\pth} + 2\gr{\sup g-\inf g}\capital{\betstrat', c'}{s}\gr{\pth} \\
&< g^n_1\gr{\pth\gr{s}}+\delta+\epsilon \\
&< \br{\upoisssg_{t-s}g}\gr{\pth\gr{s}}+3\epsilon.
\end{align*}
Since \(\pth\in\countpaths\), \(g\in\bfns\) and \(\epsilon\in\posreals\) were arbitrary, this proves the inequality in the statement.
\end{relproof}

\subsection{Proof of \zcref{prop:expected renewal time}}\label{asec:proof of expected renewal time}
Finally, we set out to prove \zcref{prop:expected renewal time}, and we'll do so with the help of the following result.
\begin{lemma}\label{lem:upper probability of not jumping}
For any \(t,\Delta\in\posreals\),
\begin{equation*}
\oscuprev{\indica{\set{N_{t+\Delta}=N_t}}}[t]
= e^{-\Delta\llambda}
\quad\text{with }
\set{N_{t+\Delta}=N_t}\coloneqq\set{\pth\in\countpaths\colon \pth\gr{t+\Delta}=\pth\gr{t}}.
\end{equation*}
\end{lemma}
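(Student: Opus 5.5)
The natural route is to reduce the claim to \zcref{the:with sublinear poisson semigroup}, since \(\indica{\set{N_{t+\Delta}=N_t}}\) is not literally of the form \(g\gr{N_{t+\Delta}}\) but becomes one after conditioning on the value at~\(t\).

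\textbf{Step 1: reduce to an unconditional statement.} By \zcref{prop:shifted upper expectation}, for every \(\pth\in\countpaths\),
\[
\oscuprev{\indica{\set{N_{t+\Delta}=N_t}}}[t]\gr{\pth}
= \osuprev{\indica{\set{N_\Delta=0}}},
\]
because \(\gr{\pth\oplus_t\vpth}\gr{t+\Delta}=\pth\gr{t}+\vpth\gr{\Delta}\). So it suffices to show that \(\osuprev{\indica{\set{N_\Delta=0}}}=e^{-\Delta\llambda}\).

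\textbf{Step 2: apply the semigroup characterisation.} Write \(\indica{\set{N_\Delta=0}}=g\gr{N_\Delta}\) with \(g\coloneqq\indica{\set{0}}\in\bfns\). \zcref{the:with sublinear poisson semigroup} with \(s=0\) then gives
\[
\osuprev{\indica{\set{N_\Delta=0}}}=\br{\upoisssg_\Delta g}\gr{0}.
\]

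\textbf{Step 3: compute \(\upoisssg_\Delta\indica{\set{0}}\) at \(0\).} I expect the semigroup to stay in the two-parameter class
\[
h_{a,b}\coloneqq a\indica{\set{0}}+b,\qquad a,b\geq0,
\]
with \(h_{a,b}\) decreasing.

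For such a function,
\[
\br{\upoissgen h_{a,b}}\gr{0}=\max\set{-\llambda a,-\ulambda a}=-\llambda a,
\]
and \(\upoissgen h_{a,b}=0\) on \(n\geq1\).

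Hence one Euler step gives
\[
\gr*{\eye+\tfrac{\Delta}{k}\upoissgen}h_{a,b}=h_{a\gr{1-\llambda\Delta/k},\,b},
\]
which is again decreasing and of the same form once \(k\) is large enough that \(\llambda\Delta/k\leq1\).

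Iterating \(k\) times and using the limit in \zcref{eq:upoisssg as exponential}, I get
\[
\br{\upoisssg_\Delta g}\gr{0}=\lim_{k\to+\infty}\gr{1-\llambda\Delta/k}^k=e^{-\Delta\llambda}.
\]

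This step is the only one needing care. The point is to check that the \(\max\) in \(\upoissgen\) is attained at \(\llambda\) at every iteration, which holds because the function stays decreasing. The value at states \(n\geq1\) is irrelevant anyway.

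\textbf{Alternative if Step 3 is awkward.} I could instead sandwich the value directly:

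- the upper bound comes from \zcref{prop:sublinear not greater than watanabe};
- the lower bound comes from \zcref{prop:monotonicity in lambda} with \(\llambdai=\ulambdai=\llambda\), together with the precise Poisson identity \(\br{\poisssg^{\llambda}_\Delta\indica{\set{0}}}\gr{0}=e^{-\llambda\Delta}\), as established in the proof of \zcref{the:with sublinear poisson semigroup}.
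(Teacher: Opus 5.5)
Your proof is correct and follows the paper's own argument: reduce to \(\br{\upoisssg_\Delta\indica{\set{0}}}\gr{0}\) via \zcref{the:with sublinear poisson semigroup}, then check that each Euler step multiplies \(\indica{\set{0}}\) by \(1-\llambda\Delta/k\), which gives \(e^{-\Delta\llambda}\) in the limit. The only difference is that you shift to time~\(0\) with \zcref{prop:shifted upper expectation} before invoking the theorem, whereas the paper applies the theorem at time~\(t\) to \(\indica{\set{x}}\) and then uses translation invariance of \(\upoisssg_\Delta\) (Lemma~53 of \cite{2019Erreygers}); note also that your fallback sandwich would not avoid Step~3, since its upper bound still requires computing \(\br{\upoisssg_\Delta\indica{\set{0}}}\gr{0}\).
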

The reader will have no difficulty in verifying that while our proof invokes results that come after \zcref{prop:expected renewal time}---in particular \zcref{the:with sublinear poisson semigroup}---these don't rely on this particular result.
\begin{proof}
Fix any \(\pth\in\countpaths\), and let \(x\coloneqq\pth\gr{t}\).
Then it follows from the definition of~\(\oscuprev{\bullet}[t]\gr{\pth}\) that
\begin{equation*}
\oscuprev{\indica{\set{N_{t+\Delta}=N_t}}}[t]\gr{\pth}
= \oscuprev{\indica{x}\gr{N_{t+\Delta}}}[t]\gr{\pth}.
\end{equation*}
From this, \zcref{the:with sublinear poisson semigroup} and Lemma~53 in \cite{2019Erreygers}, it follows that
\begin{equation*}
\oscuprev{\indica{\set{N_{t+\Delta}=N_t}}}[t]\gr{\pth}
= \br{\upoisssg_\Delta\indica{x}}\gr{x}
= \br{\upoisssg_\Delta\indica{0}}\gr{0}
= \lim_{k\to+\infty} \gr*{\eye+\frac{\Delta}{k}\upoissgen}^k\indica{0}\gr{0}.
\end{equation*}
To obtain the equality in the statement, it therefore suffices to observe that (i) for any \(k\in\nats\) such that \(1-\llambda\Delta/k\geq0\),
\begin{equation*}
\gr*{\eye+\frac{\Delta}{k}\upoissgen}^\ell\indica{0}
= \gr*{1-\frac{\Delta\llambda}{k}}^\ell\indica{0}
\quad\text{for all } \ell\in\set{0, \dots, k};
\end{equation*}
and (ii) \(\lim_{k\to+\infty} \gr{1-\Delta\llambda/k}^k=e^{-\Delta\llambda}\).
\qed\end{proof}

\begin{relproof}{\zcref{prop:expected renewal time}}
Due to a similar argument as the one at the end of our proof for \zcref{prop:expected increment}, it suffices to prove that
\begin{equation}\label{eq:bounds on sigma_stopt}
\oscuprev{\rho_\stopt-\stopt}
\leq \frac1{\llambda}
\quad\text{and}\quad
\oscuprev{-\gr{\rho_\stopt-\stopt}}
\leq -\frac1{\ulambda}.
\end{equation}

If \(\llambda=0\), the first inequality in~\eqref{eq:bounds on sigma_stopt} is trivial.
If \(\llambda>0\), we let \(c\coloneqq1/\llambda\) and consider the trading strategy~\(\betstrat\) with stopping times~\(\stopt_1\coloneqq\stopt\) and \(\stopt_2\coloneqq\rho_\stopt\) and stakes~\(\ustake{1}\coloneqq0\) and \(\lstake{1}\coloneqq1/\llambda\).
Then for all \(\vpth\in\countpaths\) and \(r\in\posreals\),
\begin{equation*}
\capital{\betstrat, c}{r}\gr{\vpth}
= \begin{cases}
\frac1\llambda &\text{if } r\leq\stopt\gr{\vpth} \\
\frac1\llambda+\gr{r-\stopt\gr{\vpth}} &\text{if } \stopt\gr{\vpth}<r<\rho_\stopt\gr{\vpth} \\
\rho_\stopt\gr{\vpth}-\stopt\gr{\vpth} &\text{if } r\geq\rho_\stopt\gr{\vpth}.
\end{cases}
\end{equation*}
From this equality, we see that \(\capital{\betstrat, c}{\bullet}\) is bounded below (by~\(1/\llambda\)) and superhedges~\(\rho_\stopt-\stopt\).
This implies the first inequality in~\eqref{eq:bounds on sigma_stopt}.

Establishing the second inequality is a bit more involved.
Let \(t\coloneqq\stopt\gr{\pth}\), and note that (i) \(\tostop{\stopt}{\pth}=\tostop{t}{\pth}\); and (ii) \(\rho_\stopt\gr{\vpth}-\stopt\gr{\vpth}=\rho_t\gr{\vpth}-t\) for all \(\vpth\in\tostop{\stopt}{\pth}\).
It follows from this and the definition of~\(\oscuprev{\bullet}[\stopt]\gr{\pth}\) that
\begin{equation}\label{eq:rho_stopt to rho_t}
\oscuprev{-\gr{\rho_\stopt-\stopt}}[\stopt]\gr{\pth}
= \oscuprev{-\gr{\rho_t-t}}[t]\gr{\pth}.
\end{equation}

Let \(c\) be any strictly positive real number such that \(c<1/\ulambda\), fix some \(\Delta\in\posreals\) and consider the trading strategy~\(\betstrat_{c,\Delta}\) with stopping times~\(\stopt_1\coloneqq t\) and \(\stopt_2\coloneqq\rho_t\wedge\gr{t+\Delta}\) and stakes~\(\ustake{1}\coloneqq c\) and \(\lstake{1}\coloneqq0\).
Then for all \(\vpth\in\countpaths\) and \(r\in\posreals\),
\begin{equation*}
\capital{\betstrat_{c,\Delta}, -c}{r}\gr{\vpth}
= \begin{cases}
-c &\text{if } r\leq t \\
-c-c\ulambda\gr{r-t} &\text{if } t<r<\rho_t\gr{\vpth}\wedge\gr{t+\Delta} \\
-c\ulambda\gr{\rho_t\gr{\vpth}-t} &\text{if } r\geq\rho_t\gr{\vpth}\leq t+\Delta \\
-c-c\ulambda\Delta &\text{if } r\geq t+\Delta<\rho_t\gr{\vpth}.
\end{cases}
\end{equation*}
From this equality, we see that \(\capital{\betstrat_{c,\Delta}, -c}{\bullet}\) is bounded below and superhedges \(-\gr{\rho_t-t}\) on~\(\set{\vpth\in\countpaths\colon \rho_t\gr{\vpth}\leq t+\Delta}\).
It remains for us to counteract the additional~\(-c\) term on the remaining set~\(S\coloneqq\set{\vpth\in\countpaths\colon \rho_t\gr{\vpth}>t+\Delta}=\set{\vpth\in\countpaths\colon \vpth\gr{t+\Delta}=\vpth\gr{t}}\).
By \zcref{lem:upper probability of not jumping,lem:capital above remaining infimum}, for all \(\epsilon\in\sposreals\) there is some positive capital process~\(\capital{\betstrat'_{\epsilon, \Delta}, c_\epsilon}{\bullet}\) that superhedges~\(\indica{S}\) on~\(\tostop{t}{\pth}\) with \(\capital{\betstrat'_{\epsilon, \Delta}, c_\epsilon}{t}\gr{\pth}<e^{-\Delta\llambda}+\epsilon\).
Consequently, the capital process
\begin{equation*}
\capital{\betstrat_{c,\Delta}, -c}{\bullet}+c\capital{\betstrat_{\epsilon, \Delta}, c_\epsilon}{\bullet}
\in\oscapprocs
\end{equation*}
is bounded below and superhedges~\(-\gr{\rho_t-t}\) on~\(\tostop{t}{\pth}=\tostop{\stopt}{\pth}\), and therefore
\begin{equation*}
\oscuprev{\rho_t-t}[t]\gr{\pth}
\leq \capital{\betstrat_{c,\Delta}, -c}{t}\gr{\pth}+c\capital{\betstrat_{\epsilon, \Delta}, c_\epsilon}{t}\gr{\pth}
< -c + c\gr{e^{-\Delta\llambda}+\epsilon}.
\end{equation*}
Since this inequality holds for arbitrary \(c<1/\ulambda\), \(\Delta\in\posreals\) and \(\epsilon\in\sposreals\), it follows [taking \(\epsilon\) arbitrarily small, \(\Delta\) arbitrarily large and \(c\) as large as possible] that
\begin{equation*}
\oscuprev{\rho_\stopt-\stopt}\gr{\pth}
= \oscuprev{\rho_t-t}[t]\gr{\pth}
\leq -\frac1{\ulambda},
\end{equation*}
where the first equality is \zcref{eq:rho_stopt to rho_t}.
This proves the second inequality in \eqref{eq:bounds on sigma_stopt}.
\end{relproof}

\fi
\end{document}